\newtheorem{theorem}{Theorem}[section]
\newtheorem{corollary}[theorem]{Corollary}
\newtheorem{lemma}[theorem]{Lemma}
\newtheorem{proposition}[theorem]{Proposition}
\theoremstyle{definition}
\newtheorem{remark}{Remark}
\newcommand{\eps}{\varepsilon}
\let\e=\varepsilon
\newcommand{\ox}{\overline{x}}
\newcommand{\oa}{\overline{\alpha}}
\def\R{\mathbb{R}}
\def\P{\mathbb{P}}
\def\E{\mathbb{E}}
\newcommand{\Si}{\mathbf{S}}
\newcommand{\Mi}{\mathbf{M}}
\newcommand{\tr}{\operatorname{\text{tr}}}
\newcommand{\om}{\Omega} 
\newcommand{\de}{\partial} 
\newcommand{\al}{\alpha}
\newcommand{\ovl}{\overline} 
\DeclareMathOperator{\dist}{dist}
\begin{document} \title{Nonexistence of nonconstant solutions of  some degenerate Bellman equations and applications to stochastic control}
\author{Martino Bardi,\quad Annalisa Cesaroni,\quad Luca Rossi 
\footnote{Department of 
Mathematics, University of Padova, Via Trieste 63, 35121 Padova, Italy, email: 
bardi@math.unipd.it, annalisa.cesaroni@unipd.it, lucar@math.unipd.it.\newline Current address of A.C. Department of Statistical Sciences,  University of Padova.\newline
M.B. and A.C. were partially supported by   
the European Project Marie Curie ITN ``SADCO - Sensitivity Analysis for 
Deterministic Controller Design''.
A.C. and L.R. were partially supported by the GNAMPA Project 2014 
``Propagation phenomena on lines and networks''.}}

\date{}
\maketitle
\begin{abstract} 
For a class of Bellman equations in bounded domains we prove that sub- and supersolutions whose growth at the boundary is suitably controlled 
 must be constant.
The ellipticity of the operator is assumed to degenerate at the boundary and a 
condition 
 involving also  the 
drift is further imposed.  We apply this result to  stochastic control problems, 
in particular to an exit problem  and to the small discount limit  related with 
ergodic control with state constraints. 
In this context, our condition on the behavior of the operator near 
the boundary ensures some invariance property of the domain for the associated controlled diffusion process.  
\end{abstract} 
\section {Introduction} 
We consider the fully nonlinear elliptic operator 
 \begin{equation}
 \label{F} 
F[u]:= \sup_{\alpha\in A} \left(-b(x,\alpha)\cdot Du(x)- \tr 
\left(a(x,\alpha)D^2 u(x)\right)\right) ,
 \end{equation}
which is usually called Hamilton-Jacobi-Bellman (briefly, HJB). 
Here ``$\tr$'' denotes the trace of a matrix.
Our first main result is in some sense a counterpart for 
a bounded domain $\Omega\subseteq \R^n$ of results of Liouville type in the whole space.
We assume that $F$
degenerates  at the boundary of $\Omega$, at least  in the normal direction to 
$\partial\Omega$, for some $\oa\in A$,   and that 
the quantity $b(x,\oa)\cdot Dd(x) +\tr(a(x,\oa) D^2d(x))$ is positive near 
$\partial\Omega$, where $d(x)$ is the distance of $x$ from $\partial\Omega$. 
This is related with the viability, or weak invariance, of $\Omega$ for the 
controlled diffusion process associated with the operator $F$.
We show that any viscosity subsolution $u$ of $F[u]=0$ in $\Omega$ such that 
 \begin{equation}
 \label{bcond} 
   u(x)=o(-\log(d(x)))
 \quad \text{ as } \; x\to \partial\Omega ,
 \end{equation}
  must be constant. A similar result holds for supersolutions under a stronger 
condition on the coefficients near the boundary related to the invariance of 
the associated diffusion process for all controls. 

Results of nonexistence of nonconstant solutions in  unbounded domains for elliptic equations  are usually called Liouville-type 
theorems. For fully nonlinear 
 equations they have been studied in the last ten years by several authors, see, e.g., \cite{CDC}, \cite{CF}, and the references therein. They are very different from our result for various reasons. 
In particular, in our case the drift term compensates the degenerate ellipticity of $F$ at the boundary, whereas  in the unbounded case with uniformly elliptic $F$ the drift term represents a difficulty.


The main application of our 
 Liouville-type results concerns the well-posedness of the so-called ergodic 
Hamilton-Jacobi-Bellman  equation
 \begin{equation}
 \label{cell}   
\sup_{\alpha\in A} \left(-b(x,\alpha)\cdot D\chi(x)- \tr (a(x,\alpha) 
D^2\chi(x))-l(x,\alpha)
\right)=c,\qquad   x\in\Omega.
\end{equation}
The unknowns here are $(c,\chi) \in\R\times C(\Omega)$. Under mild continuity 
and boundedness assumptions on the data and the non-degeneracy condition in the 
interior of $\Omega$
\begin{equation}
\label{sell-0}
 \text{$a(x,\alpha)>0$  for 
every $x\in\Omega$ and $\alpha\in 
A$, }
\end{equation}
we prove that this problem has a solution, the additive eigenvalue $c$ is 
unique, and the viscosity solution $\chi$ is unique up to the addition of 
constants among functions $u$
satisfying the boundary condition \eqref{bcond}.

There are several motivations for the above mentioned results, especially 
from stochastic optimal control.
Here we give two applications, in Sections \ref{sec:exit} and \ref{stoch_inter}, 
respectively. The first concerns an exit time problem for the controlled 
diffusion 
\begin{equation}
\label{sys}
\begin{cases} dX_t^{\alpha_\cdot}= b(X_t^{\alpha_\cdot}, \alpha_t)dt+\sqrt{2}\sigma(X_t^{\alpha_\cdot}, 
\alpha_t)dW_t\\ 
X^{\alpha_\cdot}_0=x\in\overline{\Omega},\end{cases} 
\end{equation} 
where $\alpha_\cdot\in\mathcal{A}$ is the control. There is a well-known link between 
the associated value function $v$ and the HJB operator $F$ given by \eqref{F} 
with $a:=2\sigma\sigma^T$ (see, e.g., \cite{FS}). Using 
our Liouville-type result we show that $v$ is a constant that, under suitable 
conditions, we can explicitly compute.


The second application concerns the so-called small discount limit. 
Letting $(v_\lambda)_{\lambda>0}$ be the infinite-horizon discounted value 
functions:
\begin{equation*}
v_\lambda (x):=  \inf_{\alpha_\cdot\in\mathcal{A}} \E 
\left[\int_0^\infty e^{-\lambda t} l(X_t^{\alpha_\cdot},\alpha_t) dt \right] , \quad 
x\in\overline\Omega,
\end{equation*} 
where $X_t^{\alpha_\cdot}$ solves \eqref{sys} and $l$ is bounded, 
we prove that $\lambda v_\lambda (x)\to c$ 
 and 
$v_\lambda(x) - v_\lambda(\tilde x)\to \chi(x)$ as $\lambda\to 0$ locally 
uniformly, where $(c, \chi)$ 
solves  the ergodic HJB equation \eqref{cell} and $\tilde x$ is any point in 
$\Omega$. This is related with the ergodic control problem for stochastic 
processes, where $c$ is the optimal cost in the minimisation of 
the limit of $T^{-1} \E 
\big[\int_0^T  l(X_t^{\alpha_\cdot},\alpha_t) dt \big]$ as $T\to+\infty$, and 
 $D\chi$ allows to synthesise an optimal feedback (at least in principle, under further assumptions). 
There is a large literature on this topic,
see \cite{pll:85, bf2, bbud, bdl} for diffusions reflected at the boundary and 
Neumann boundary conditions in \eqref{cell}, \cite{al, abmem} for periodic 
boundary conditions, 
and the recent monograph \cite{abg}, 
as well as the references therein. Some of the cited papers deal with the model problem
\begin{equation*}
\label{model}
-\Delta u+ |Du|^p -f(x) = c ,\qquad \text{in } \Omega ,
\end{equation*} 
for $p>1$, which is a special case of \eqref{cell} with unbounded drift 
$b$ and running cost $l$, see, e.g., \cite{pll:85}. 
Lasry and Lions \cite{ll}  
 showed that, in the case $1<p\leq2$, the problem is uniquely solvable, up to 
constants, under the boundary condition
$$ u(x)\to+\infty
 \quad \text{ as }\; 
 x\to\partial\Omega. 
 $$ They use 
the small discount approximation (see also \cite{p}), and show the connection of the singular boundary condition with stochastic control under state constraints. 

A related result on the small discount approximation for linear operators with singular drift was 
 obtained in \cite{lp}, where it is considered in particular the following problem
\[
\lambda u(x)-\frac{b(x)}{d(x)}\cdot Du(x)-a \Delta u -f(x) = 0 ,\qquad \text{in 
} \Omega ,
\] 
under the assumptions $\lambda>0$,  $b(x)\cdot Dd(x)>a>0$ and $b(x)\cdot 
\tau(x)=0$ for every $x\in\partial \Omega$ and every $\tau(x)$ tangential vector 
to $\partial \Omega$ at $x$. The authors prove   the existence of a  
$C^2(\Omega)\cap W^{1,\infty} (\Omega)$ solution of such problem, which is the 
unique $C^2(\Omega)$ solution  
such that \eqref{bcond} is satisfied (see \cite[Thm 6]{lp}).  

Here, differently  from \cite{pll:85, ll, lp}, we make the  assumption that $b(x,\alpha)$ is bounded, so $F[u]$ grows 
at most linearly in $Du$ and is not necessarily coercive. On the other hand we 
make assumptions on the coefficients $b$ and $a$ at the boundary that imply an 
invariance property of the domain such that the control problems of reflected 
diffusion and state constraint are essentially equivalent. 
Our weaker boundary condition \eqref{bcond} 
fits this context. Our assumptions on $b$ and $a$ near 
$\partial\Omega$ are related to the characteristic boundary points for linear 
operators \cite{f} and to the irrelevant points for the generalized Dirichlet 
problem \cite{bb, br}. They also arise in the recent work \cite{bcpr} on the 
generalized principal eigenvalue and the Maximum Principle for degenerate 
operators such as $F$.

Let us mention two additional sources of interest for the ergodic PDE \eqref{cell} that we do not develop here.
The first is the theory of homogenisation and singular perturbations for fully 
nonlinear equations. In that context \eqref{cell} is called the cell problem, 
the constant $c$ allows to define an effective Hamiltonian for the limit PDE, 
and $\chi$ is called the corrector and is a fundamental tool for proving the 
convergence, see, e.g., \cite{abmem} and the references 
therein.
%
%
The second is the asymptotic behavior as $t\to +\infty$ 
of solutions of the evolution equation
 \begin{equation*}
 \label{paraintro}   
 u_t
 +\sup_{\alpha\in A} \left(-b(x,\alpha)\cdot Du
 - \tr (a(x,\alpha) 
D^2u
)-l(x,\alpha)
\right)=0,\quad x\in\om,\ t>0, 
\end{equation*}
with initial datum $u(x,0)=u_0(x)$. 
In several cases, such as periodic boundary conditions, it 
is known that $u(x,t)/t \to c$ as $t\to +\infty$, where $c$ is the constant 
solving \eqref{cell}, a fact also related to ergodic stochastic control, see, 
e.g., \cite{al, abmem}. 
In some cases it can be proved, more precisely, that  $u(x,t) -ct \to \chi(x)$, 
see, e.g.,  \cite{bpt}. The validity of these results with the boundary 
condition \eqref{bcond} will be the subject of further investigations.

The paper is organized as follows. In Section \ref{main} we list the precise 
assumptions and state the two main results of the paper.  Section 
\ref{sec:Lyapunov} is devoted to the construction of a strict supersolution to 
$F[u]=0$ with suitable boundary behavior, that plays the role of a Lyapunov 
function, first when $\partial \Omega$  is of class $C^2$ and then when it is
nonsmooth under further assumptions on $F$.
In Section \ref{sec:Liouville} we prove the non existence of non trivial sub and supersolutions to   $F[u]=0$  for 
$\partial \Omega$ smooth and nonsmooth.
Section \ref{sec:exit} deals with the stochastic control problem with exit times. 
In Section  \ref{sec:ergo} we first prove the well posedness of the ergodic HJB equation 
\eqref{cell} and then interpret the result within stochastic control theory as a vanishing discount limit.

\section{Main results}
\label{main}
Throughout the paper we will assume, if not otherwise stated, that $\Omega$ be a 
bounded connected open  set in $\R^n$ with $C^2$ boundary. 
Let $d(x)$ be the signed distance function from $\partial\Omega$, i.e. 
\begin{equation}\label{distance} d(x):=\dist(x, 
\R^n\backslash\Omega)-\dist(x, \overline{\Omega}).\end{equation} 
We know, from e.g.~\cite[Lemma 14.16]{gt}, that $d$ is of class $C^2$ in some 
neighborhood $\overline{\om_\delta}$ of the boundary, where, here 
and in the sequel, 
\begin{equation}\label{odelta}\Omega_\delta:=\{x\in\Omega \ |\ 
d(x)<\delta\}.\end{equation} 

We consider the fully nonlinear elliptic operator \eqref{F},
with $A\subseteq \R^m$ closed and
\[b:\overline{\Omega}\times A\to \R^n,\qquad a:\overline{\Omega}\times A\to 
\Mi_{n\times n} 
\]
bounded and continuous, $\Mi_{n\times n}$ being the space of  
$n\times n$ real matrices. 
We further assume that $a(x,\alpha)$ is symmetric and nonnegative definite for  
all $x,\alpha$.  
This implies that  $a \equiv \sigma\sigma^T$
for some    $\sigma: \overline{\Omega}\times A\to \Mi_{n\times r}$,  $r\geq 1$. 
Notice that we could take $\sigma\in \Mi_{n\times  n}$ and  symmetric, but we use this notation since in the following, for 
application to stochastic control problems, the matrix $a$ will be obtained from a non symmetric $\sigma$. 

In the interior of $\om$ the diffusion is assumed to be non-degenerate. For 
some results, this will be required in the weak sense that 
the {\em Strong 
Maximum Principle} holds:\\
{if $u\in USC(\Omega)$ is a viscosity subsolution to $F[u]= 0$ in 
$\Omega$ and 
there exists $x_0\in \Omega$ such that $u(x_0)=\max_{\Omega}u$, then $u$ is 
constant.}

\noindent For a detailed analysis of this property for the HJB operator $F$  
see \cite{bdl01}. 
It is satisfied if for any $x\in\Omega$ 
there exists $\alpha_x\in A$ such that the corresponding linear operator $- 
\tr(a(x,\alpha_x)D^2u)$ 
satisfies H\"ormander hypoellipticity condition.
In particular, it holds if $a(x,\alpha_x)>0$ in the sense of matrices.

The main regularity assumptions on the coefficients with respect to $x$ will be 
the following.  
There exists a modulus of continuity~$\omega$ such that 
\begin{equation}\label{reg22}
\forall x,y\in \overline{\Omega},\ \alpha\in A,\qquad
|b(x,\alpha)-b(y,\alpha)|\leq \omega(|x-y|).\end{equation}A modulus of continuity $\omega$ is a nonnegative function, continuous at $0$ with 
$\omega(0)=0$. 
Moreover, we assume that the square root $\sigma$ of the  matrix $a$ is H\"older or Lipschitz continuous uniformly in $\alpha$, with sufficiently large H\"older exponent,  i.e. 
 \begin{equation}\label{reg2} \exists \beta\in 
\left(1/2,1\right],\ B>0,\qquad\forall x,y\in \overline{\Omega}, \alpha\in 
A,\qquad
 |\sigma(x,\alpha)-\sigma(y,\alpha)|\leq B|x-y|^\beta,\end{equation} 
where, even for matrices, $|\cdot|$ stands for the standard Euclidean norm. 
\begin{remark} Note that we are not assuming Lipschitz continuity of 
the coefficients of the operator $F$,  but just uniform continuity of the drift and H\"older continuity of the 
matrix~$\sigma$, which in particular   assure that the operator $F$ is continuous. 
We will strengthen these assumptions to H\"older continuity in Section 6  and to Lipschitz continuity in  
applications to stochastic control problems. 

The regularity assumption \eqref{reg2} is given directly on the matrix $\sigma$   as it is natural for applications  to stochastic control problems. In any case, we recall some sufficient conditions that can be imposed on the matrix $a$ in order to get that \eqref{reg2} holds on the  symmetric square root~$\sigma$.   Assume that $a(\cdot, \alpha)\in W^{2, p}(\Omega)$ for every $\alpha\in A$ with $p>1$ and that $\|a(\cdot,\alpha)\|_{W^{ 2,p}}\leq C$ for $C$ independent of $\alpha\in A$. Then it is proved in \cite[Thm1]{lv95} that $\sigma(\cdot ,\alpha)\in W^{1,2p}(\Omega)$, and moreover it can be observed  that the $W^{1,2p}$ norm of $\sigma$ is independent of $\alpha$. Using Morrey's inequality we have that if $p=\infty$, then \eqref{reg2} is satisfied with $\beta=1$, and if $p>2n$, then \eqref{reg2} is satisfied with $\beta= 1-\frac{n}{p}$.  
\end{remark}

We will make different kind of assumptions about the behavior of $F$ at 
$\partial\om$. The first and weaker one is the following.
\begin{equation}\label{irrelevant}
\begin{array}{c}
\exists\delta,k>0,\ \gamma<2\beta-1,\
\text{such that, for all }\overline{x}\in\partial \Omega,\text{ there is $\oa\in A$ for which }\\
\begin{cases}
\sigma^T (\ox,\oa) Dd(\ox)=0,\\
\forall x\in\Omega\cap B_\delta(\overline x),\qquad
b(x,\oa)\cdot Dd(x) +\tr(a(x,\oa) D^2d(x))\geq k\, d^\gamma(x), 
\end{cases}
\end{array}
\end{equation} 
where $\beta$ is the exponent in \eqref{reg2}.
The first condition in \eqref{irrelevant} means that at any boundary point, the 
normal is a direction of degeneracy for $F$, at least for some $\alpha$.
The second condition is ensured if at the boundary the normal component of 
the drift points inward and is sufficiently large. We will see that 
\eqref{irrelevant} ensures 
the existence of an appropriate Lyapunov function for the system, playing the same role as the already mentioned
condition in \cite{lp}. 
Notice however that  condition \eqref{irrelevant} does not prevent the function 
$b(\cdot,\oa)\cdot Dd(\cdot)+\tr(a(\cdot,\oa) D^2d(\cdot))$ from vanishing at 
$\ox$, even though, in such case, it cannot be Lipschitz continuous because 
$\gamma<1$.

\begin{remark}\label{SCirr}
A sufficient  condition for \eqref{irrelevant} to hold 
(with $\gamma=0$) is the following: there exists $k>0$ such that 
for all $\overline{x}\in\partial \Omega$ there is $\oa\in A$ such that the first condition in \eqref{irrelevant} holds and 
\begin{equation} \label{irr}   
b(\ox,\oa)\cdot Dd(\ox) +\tr(a(\ox,\oa) D^2d(\ox))>k.
\end{equation} Indeed, from \eqref{irr}, using the hypotheses 
  \eqref{reg22}, \eqref{reg2} together with the $C^2$ regularity of $d$ in a 
neighborhood of the compact set $\partial\Omega$, we deduce the existence 
of $\delta>0$ independent of $\overline x$ such that the second condition in 
\eqref{irrelevant} holds with $\gamma=0$ and $k$ 
replaced by $k/2$. Notice that this argument does not work with $k=0$, even 
though $\partial\Omega$ is compact, because of the first condition in 
\eqref{irrelevant}. We exhibit a counter-example in Remark \ref{rem:irr} below, 
where we do not know if the Liouville property holds.
\end{remark} 

Our first main theorem is the following result on nonexistence of nonconstant subsolutions. 
\begin{theorem}\label{liouville}
Let \eqref{reg22}-\eqref{irrelevant}  hold and let $F$ satisfy the Strong Maximum Principle in $\Omega$. 
If $u\in USC(\Omega)$ is a  viscosity subsolution to  
$F[u]=0$ satisfying  
\begin{equation}\label{sublog}
\limsup_{x\to\partial\om} \frac{u(x)}{-\log 
d(x)}\leq 0,
\end{equation}
then $u$ is constant.  
\end{theorem}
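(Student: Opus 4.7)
My plan is to use the Strong Maximum Principle after forcing $\sup_\Omega u$ to be attained in the interior, via a penalization with a Lyapunov function that blows up at $\partial\Omega$. As input I would take the forthcoming construction in Section \ref{sec:Lyapunov} of a classical strict supersolution: a function $w \in C^2(\Omega_\delta)$ satisfying $F[w]\geq c_0 > 0$ in $\Omega_\delta$ and $w(x)\geq c_1\bigl(-\log d(x)\bigr)$ near $\partial\Omega$ for some $c_1>0$. Locally near $\ox\in\partial\Omega$ the natural candidate is $w=-\log d$: picking $\oa$ as in \eqref{irrelevant}, a direct computation gives
\[
-b(x,\oa)\cdot Dw - \tr\bigl(a(x,\oa) D^2 w\bigr) = \frac{b(x,\oa)\cdot Dd + \tr(a(x,\oa) D^2 d)}{d} - \frac{Dd^T a(x,\oa) Dd}{d^2},
\]
whose first term is at least $k\,d^{\gamma-1}$ by \eqref{irrelevant}, while the quadratic term is $O(d^{2\beta-2})$ by \eqref{reg2} combined with $\sigma^T(\ox,\oa)Dd(\ox)=0$; the hypothesis $\gamma<2\beta-1$ makes the positive contribution dominate.

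With such $w$ in hand, for each $\epsilon>0$ set $v_\epsilon:= u-\epsilon w$ on $\Omega_\delta$. Assumption \eqref{sublog}, together with the lower bound on $w$, gives $v_\epsilon(x)\to-\infty$ as $x\to\partial\Omega$. The key claim is that $v_\epsilon$ cannot attain $\sup_{\Omega_\delta}v_\epsilon$ at an interior point of $\Omega_\delta$: if it did so at some $x_0$, then $\phi(x):=\epsilon w(x)+[u(x_0)-\epsilon w(x_0)]$ would be a $C^2$ function touching $u$ from above at $x_0$, so the viscosity subsolution inequality would give $F[\phi](x_0)\leq 0$; but $F$ is positively $1$-homogeneous and invariant under adding constants, so $F[\phi]=\epsilon F[w]\geq \epsilon c_0>0$, a contradiction. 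Hence $\sup_{\Omega_\delta}v_\epsilon$ is attained on the inner boundary $\{d=\delta\}$, on which $w$ is bounded, and for every $x\in\Omega_\delta$,
\[
u(x)\leq \epsilon w(x) + \sup_{\{d=\delta\}} u + \epsilon\|w\|_{L^\infty(\{d=\delta\})}.
\]
Letting $\epsilon\to 0^+$ with $x$ fixed yields $u(x)\leq \sup_{\{d\geq\delta\}} u$. Since $\{d\geq\delta\}$ is a compact subset of $\Omega$ and $u$ is upper semicontinuous, this supremum is attained at an interior point $x^*\in\Omega$, and the Strong Maximum Principle then forces $u\equiv u(x^*)$.

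The main obstacle is the Lyapunov construction itself: the optimal control $\oa$ in \eqref{irrelevant} depends on the boundary point $\ox$, so the local supersolutions $-\log d$ with different $\oa$ must be patched into a single global $w$ defined on a uniform collar $\Omega_\delta$, and the argument must be adapted to non-smooth $\partial\Omega$ with possibly only Hölder-regular $\sigma$. Once $w$ is produced, the rest is a clean combination of the penalization above, the homogeneity of $F$, and the Strong Maximum Principle.
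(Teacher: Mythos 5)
Your argument is correct and matches the paper's proof: the paper penalizes $u$ by $-\eps\log d(x)$ (with constant offset $K_\delta=\max_{\Omega\setminus\Omega_\delta}u$), shows the maximum over $\Omega_\delta$ of the penalized function cannot be interior because the viscosity subsolution inequality would force $\eps F[-\log d(x_0)]\leq 0$, contradicting Proposition \ref{liap2}, then lets $\eps\to 0$ to conclude $u\leq K_\delta$ on $\Omega_\delta$ and finishes with the Strong Maximum Principle — exactly your scheme. The one misconception in your write-up is the claimed obstacle of ``patching local supersolutions $-\log d$ with different $\oa$'': there is nothing to patch, since $w=-\log d$ is a single global function on the collar $\Omega_\delta$ and $F$ is a supremum over $\alpha\in A$, so to bound $F[w](x)$ from below it suffices to pick, for each fixed $x$, the $\oa$ furnished by \eqref{irrelevant} at the projection of $x$ onto $\partial\Omega$, which is precisely what Proposition \ref{liap2} does to obtain a uniform $\delta$ with $F[-\log d]>M$ on $\Omega_\delta$.
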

\begin{remark}\upshape 
Observe that Theorem \ref{liouville} does not hold without imposing some  growth 
condition at the boundary. Indeed one can check that, in $\Omega=(-1,1)$, if 
$a(x)\geq d^\ell(x)$ for some $\ell>0$ and $d(x)$ small enough, then the 
function \[u(x)= \left|\int_{0}^{x} e^{d^{-\ell}(s)}ds\right| \] is a 
subsolution to $F[u]=0$ in $\Omega_\delta$, for $\delta$ small enough.  
Setting $u(x)=u(1-\delta)$ outside $\Omega\setminus\Omega_\delta$, 
one gets a subsolution to $F[u]=0$. 

On the other hand, we point out that  the growth condition \eqref{sublog} in 
Theorem \ref{liouville} can be weakened to the following: there exists 
$\kappa>0$ such that   \[ \limsup_{x\to\partial\om}  u(x)d(x)^{\kappa}\leq 0.\]  
For more details we refer to Remark \ref{remcrescita}.  
We decided to work with the more restrictive growth 
condition \eqref{sublog} because this provides us with a more detailed 
information on the boundary behavior of the solution of the ergodic problem, 
c.f.~Theorem 
\ref{cellthm}. 
\end{remark} 

The analogous  result for supersolutions to $F[u]=0$ can be 
obtained under stronger assumptions.
In particular we have to strengthen condition \eqref{irrelevant} to hold for 
every $\alpha\in A$, in the following sense. 
\begin{equation}
\label{invariance}
\begin{array}{c}
\exists\delta,k>0,\ \gamma<2\beta-1,\
\text{such that, for all }\overline{x}\in\partial \Omega\text{ and for all $
\alpha\in A$,}\\
\begin{cases}
\sigma^T (\ox,
\alpha) Dd(\ox)=0,\\
\forall x\in\Omega_\delta,\qquad
b(x,
\alpha)\cdot Dd(x) +\tr(a(x,
\alpha) D^2d(x))\geq k\, d^\gamma(x), 
\end{cases}
\end{array}
\end{equation} 
Moreover, we assume that the operator $F[u]$ satisfies the Strong Minimum Principle, that is, the operator $-F[-u]$ satisfies the 
Strong Maximum Principle. This property for our HJB operator $F$ was studied in \cite{bdl03}.
A  sufficient condition for it to hold true is that 
\begin{equation}
\label{sell} \text{$a(x,\alpha)>0$  for 
every $x\in\Omega$ and $\alpha\in 
A$. }\end{equation}

Next, we use the results on nonexistence of nontrivial solutions to derive the unique solvability of 
the ergodic problem~\eqref{cell} under the boundary condition \eqref{bcond}.
We strengthen the continuity assumption  on the drift  $b$ of the operator 
\eqref{F} and require the same hypothesis on the source term $l$: 
\begin{equation}\label{reg3} \exists\gamma\in (0,1],\quad
\forall x,y\in \overline{\Omega},\ \alpha\in A,\qquad
|b(x,\alpha)-b(y,\alpha)|,|l(x,\alpha)-l(y,\alpha)|\leq B|x-y|^\gamma.\end{equation}
\begin{theorem}
\label{cellthm} 
Under the assumptions \eqref{reg2}, 
\eqref{invariance}-\eqref{reg3}, there 
exists a unique $c\in\R$ such that the equation 
\eqref{cell}   
  admits a  viscosity 
solution $\chi$ satisfying \begin{equation}\label{chi}
\lim_{x\to\partial\om} \frac{\chi(x)}{-\log 
d(x)}= 0.
\end{equation} Moreover $\chi\in C^{2}(\Omega)$ and is 
unique up to addition 
of constants among all solutions to \eqref{cell} which satisfy \eqref{chi}. 
\end{theorem}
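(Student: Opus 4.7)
The plan is to construct $(c,\chi)$ by the vanishing-discount approximation and to derive uniqueness from Theorem \ref{liouville} (together with its counterpart for supersolutions, which is available under the stronger hypothesis \eqref{invariance}). For each $\lambda>0$ I would first produce a bounded viscosity solution $v_\lambda$ of the discounted HJB equation
\[
\lambda v_\lambda + \sup_{\alpha\in A}\bigl(-b(x,\alpha)\cdot Dv_\lambda - \tr(a(x,\alpha)D^2 v_\lambda) - l(x,\alpha)\bigr) = 0 \quad\text{in }\om,
\]
by Perron's method using the constants $\pm\|l\|_\infty/\lambda$ as barriers; this already gives $\lambda\|v_\lambda\|_\infty \leq \|l\|_\infty$ uniformly in $\lambda$.

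\emph{Uniform estimates and passage to the limit.} On compact subsets of $\om$ the operator is uniformly elliptic by \eqref{sell}, so the interior regularity theory for fully nonlinear HJB equations, together with the H\"older hypotheses \eqref{reg2} and \eqref{reg3}, provides $C^{2,\alpha}_{loc}$ bounds on $v_\lambda$ that are uniform in $\lambda$. The crucial additional bound is the boundary one: I would use the Lyapunov function $\phi$ built in Section \ref{sec:Lyapunov}, a strict supersolution of $F[u]=0$ with $\phi(x)\sim -\log d(x)$ near $\partial\om$, as a two-sided barrier for $v_\lambda - v_\lambda(\tilde x)$. Combined with the global bound $\lambda\|v_\lambda\|_\infty \leq \|l\|_\infty$, this yields
\[
|v_\lambda(x) - v_\lambda(\tilde x)| \leq C\bigl(1+\phi(x)\bigr),\qquad x\in\om,
\]
with $C$ independent of $\lambda$. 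Setting $\chi_\lambda := v_\lambda - v_\lambda(\tilde x)$, Ascoli extracts a subsequence $\lambda_n\to 0$ along which $\chi_{\lambda_n}\to \chi$ locally uniformly and $-\lambda_n v_{\lambda_n}(\tilde x) \to c$. Viscosity stability passes the equation to the limit to give \eqref{cell}, the uniform Lyapunov bound yields \eqref{chi}, and interior regularity provides $\chi\in C^2(\om)$.

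\emph{Uniqueness.} Let $(c_1,\chi_1),(c_2,\chi_2)$ both solve \eqref{cell}--\eqref{chi}, and assume $c_1<c_2$. Using that $F$ is a supremum of linear operators, a viscosity-test linearisation (at each test point one selects an $\alpha$ nearly attaining the sup in $F[\chi_2]$) shows that $w := \chi_1-\chi_2$ is a viscosity subsolution of $F[w]\leq c_1-c_2 <0$, hence of $F[w]=0$. By \eqref{chi} the function $w$ satisfies \eqref{sublog}, so Theorem \ref{liouville} forces $w$ to be constant, and the equation then forces $c_1=c_2$, a contradiction. This gives uniqueness of $c$. Once $c_1=c_2=c$, the same linearisation combined with its symmetric counterpart shows that $w$ is a subsolution of $F[u]=0$, while $-w$ is a subsolution of the dual operator $-F[-u]$; the Liouville result for supersolutions of $F$, valid under \eqref{invariance}, applied to $-w$, together with Theorem \ref{liouville} applied to $w$, gives that $w$ is constant.

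\emph{Main obstacle.} The delicate step is the uniform-in-$\lambda$ boundary estimate: the Lyapunov function $\phi$ must dominate the $O(\|l\|_\infty)$ contribution from $\lambda v_\lambda$ while keeping the $o(-\log d)$ profile at $\partial\om$, and it must act as a barrier on both sides of $\chi_\lambda$, which is exactly where the symmetric hypothesis \eqref{invariance} --- rather than \eqref{irrelevant} --- becomes indispensable. A secondary subtlety is the linearisation in the uniqueness proof: the maximiser in $F[\chi_2]$ is generally only measurable, so the reduction to a subsolution of $F[w]=0$ must be performed through viscosity test-function inequalities rather than by substituting an explicit selector.
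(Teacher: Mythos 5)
Your overall route --- vanishing discount for existence, Liouville for uniqueness --- is the same as the paper's, but two of its steps are not actually established.

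The more serious gap is the uniform local bound on $\chi_\lambda:=v_\lambda-v_\lambda(\tilde x)$. The Lyapunov comparison controls the \emph{profile} of $v_\lambda$ in $\om_\delta$ only relative to its values on the interior level set $\{d=\delta\}$, giving an estimate of the form $v_\lambda(x)\le -h\log d(x) + \max_{\{d\ge\delta\}}v_\lambda$; it does not control those interior values, and the global bound $\lambda\|v_\lambda\|_\infty\le\|l\|_\infty$ bounds $\lambda v_\lambda$, not $v_\lambda$. So the claimed inequality $|v_\lambda(x)-v_\lambda(\tilde x)|\le C\bigl(1+\phi(x)\bigr)$ with $C$ independent of $\lambda$ does not follow from what you wrote, and the ensuing Ascoli step collapses: the $C^{1,\gamma}_{\mathrm{loc}}$ bounds of Lemma \ref{ks} scale with $\|u\|_\infty$. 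The paper closes this in Step 3 of the proof of Theorem \ref{ulambda} with a genuinely additional idea: argue by contradiction, rescale by $\eps_\lambda:=\|\chi_\lambda\|_{L^\infty(K)}^{-1}$, pass to a limit $\psi$ solving $F[\psi]=0$ with $\|\psi\|_{L^\infty(K)}=1$, $\psi(\tilde x)=0$, and $|\psi|\le 1$ outside $K$ (the latter coming from the Lyapunov estimate); $\psi$ then attains a global interior extremum, contradicting the Strong Maximum/Minimum Principle guaranteed by \eqref{sell}. Your sketch omits this entirely.

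There is also a sign error in the uniqueness step. Since $F$ is a supremum of linear operators, from $H[\chi_1]=c_1$ and $H[\chi_2]=c_2$ one obtains $F[\chi_1-\chi_2]\ge c_1-c_2$, a \emph{supersolution} inequality; the linearisation obtained by selecting a near-maximiser for $\chi_2$ produces a subsolution of the concave $\inf$-operator, not of $F$. Thus $w:=\chi_1-\chi_2$ is not a subsolution of $F[u]=0$ and Theorem \ref{liouville} cannot be applied to it as you propose. The paper's Proposition \ref{uniprop} gets the orientation right: taking WLOG $c_1\ge c_2$, one has $F[\chi_1-\chi_2]\ge 0$ so $\chi_1-\chi_2$ is a supersolution, and constancy follows from the supersolution Liouville result, Theorem \ref{liouville2}, combined with Remark \ref{remcrescita}. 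Your argument can be repaired by this swap, but as written that step fails. (A minor further remark: the Perron existence for \eqref{l} is itself nontrivial under only \eqref{reg2}, \eqref{reg3}; the paper works via smooth approximations of the coefficients and Neumann problems on smooth invading subdomains precisely because comparison is not immediate.)
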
 

Actually we prove a stronger uniqueness result for $c$ and $\chi$, see Proposition \ref{uniprop}.  
\section{Lyapunov functions}
\label{sec:Lyapunov}
\subsection{The case of smooth boundary}
\begin{proposition}
\label{liap2}
Under assumptions \eqref{reg2} and \eqref{irrelevant}, for every $M\geq 0$,  
there exists    $\delta>0$ such that 
\[F[-\log(d(x))]>M, \qquad x\in \Omega_\delta,\]
where $\Omega_\delta$ is defined in \eqref{odelta}. 
 \end{proposition}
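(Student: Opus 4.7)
The plan is to compute $F[-\log d]$ explicitly, find a choice of control that makes the expression large and positive, and then verify that the potentially dangerous term is negligible by comparison.

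Set $v(x) := -\log d(x)$. Since $d \in C^2(\overline{\Omega_\delta})$, a direct computation gives, for any $\alpha \in A$,
\[
-b(x,\alpha)\cdot Dv - \tr(a(x,\alpha) D^2 v) = \frac{b(x,\alpha)\cdot Dd(x) + \tr(a(x,\alpha) D^2 d(x))}{d(x)} - \frac{|\sigma^T(x,\alpha) Dd(x)|^2}{d(x)^2},
\]
using $a = \sigma\sigma^T$. Since $F[v]$ is the supremum of the left-hand side over $\alpha \in A$, it suffices to exhibit, for each $x \in \Omega_\delta$ with $\delta$ small, a single control that makes the right-hand side exceed $M$.

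For $x \in \Omega_\delta$ with $\delta$ smaller than the reach of $\partial\Omega$, let $\ox \in \partial\Omega$ be the unique nearest boundary point, so $|x-\ox| = d(x)$ and $|Dd(x)| = 1$. Pick $\oa \in A$ associated to $\ox$ by assumption \eqref{irrelevant}. The first condition in \eqref{irrelevant} gives $\sigma^T(\ox, \oa) Dd(\ox) = 0$, so by the Hölder bound \eqref{reg2} on $\sigma$ and the Lipschitz continuity of $Dd$ on a neighborhood of $\partial\Omega$,
\[
|\sigma^T(x,\oa) Dd(x)| \leq |\sigma^T(x,\oa)-\sigma^T(\ox,\oa)|\,|Dd(x)| + |\sigma^T(\ox,\oa)|\,|Dd(x)-Dd(\ox)| \leq C\, d(x)^\beta,
\]
where $C$ depends only on $B$, $\|\sigma\|_\infty$ and $\|D^2 d\|_{L^\infty(\Omega_\delta)}$, and in particular is independent of $\ox$. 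Squaring and dividing by $d(x)^2$ gives
\[
\frac{|\sigma^T(x,\oa) Dd(x)|^2}{d(x)^2} \leq C^2\, d(x)^{2\beta-2}.
\]
The second condition in \eqref{irrelevant} meanwhile yields
\[
\frac{b(x,\oa)\cdot Dd(x) + \tr(a(x,\oa) D^2 d(x))}{d(x)} \geq k\, d(x)^{\gamma-1}
\]
for $x \in \Omega \cap B_\delta(\ox)$, which is guaranteed once $d(x) < \delta$ since $x$ is within distance $d(x)$ of $\ox$.

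Combining the two estimates, for every $x \in \Omega_\delta$,
\[
F[v](x) \geq k\, d(x)^{\gamma-1} - C^2\, d(x)^{2\beta-2}.
\]
The key point is the hypothesis $\gamma < 2\beta-1$, equivalent to $\gamma - 1 < 2\beta - 2$: among negative exponents, $d^{\gamma-1}$ blows up faster than $d^{2\beta-2}$ as $d \to 0^+$. Hence there exists $\delta_0 \in (0,\delta]$, independent of the point $\ox$, such that for all $x \in \Omega_{\delta_0}$ the first term dominates, e.g.\ $F[v](x) \geq \tfrac{k}{2} d(x)^{\gamma-1} > M$, possibly shrinking $\delta_0$ further depending on $M$.

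The only delicate step is the estimate on $|\sigma^T(x,\oa) Dd(x)|$: if $\sigma$ were only continuous, one would only get $o(1)$ and the term $|\sigma^T Dd|^2/d^2$ might dominate the positive contribution. The Hölder exponent $\beta > 1/2$ in \eqref{reg2}, coupled with $\gamma < 2\beta-1$, is precisely what is needed for the negative term to be absorbed. Uniformity in $\ox$ comes from the fact that the constants $k$, $\delta$, $B$ and $\|D^2 d\|_\infty$ in the argument are all independent of the boundary point.
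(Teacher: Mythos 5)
Your proof is correct and follows essentially the same route as the paper: compute $F[-\log d]$, use the orthogonality and H\"older conditions to bound $|\sigma^T(x,\oa)Dd(x)|$ by $O(d^\beta(x))$, invoke the second condition in \eqref{irrelevant} for the positive term, and compare the exponents $\gamma-1<2\beta-2$. The only (cosmetic) difference is that the paper uses the fact that $Dd(x)=Dd(\ox)$ along normal segments to pass directly to $|\sigma(x,\oa)-\sigma(\ox,\oa)|\leq Bd^\beta(x)$, while you instead estimate $|Dd(x)-Dd(\ox)|\leq\|D^2d\|_\infty d(x)$ from $C^2$ regularity, which gives an additional $O(d(x))$ term that is harmlessly absorbed since $\beta\leq1$.
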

\begin{proof} Take $\delta,k,\gamma$ from  \eqref{irrelevant}. Up to reducing 
$\delta$ if needed, we can suppose that $d\in 
C^2(\overline{\Omega_\delta})$. In particular, for given $x\in\Omega_\delta$, 
we can consider its unique projection on $\partial\Omega$, i.e., the point 
$\overline{x}\in\partial\Omega$ satisfying $|x-\overline{x}|=d(x)<\delta$.
Let $\oa$ be such that    \eqref{irrelevant} is verified. The first condition in 
\eqref{irrelevant} and property \eqref{reg2} yield  
\[
|\sigma^T(x,\overline{\alpha}) Dd(x)|= |\sigma^T(x,\overline{\alpha}) Dd(\ox)|
\leq 
|\sigma(x,\overline{\alpha})-\sigma(\ox,\oa)|\leq B d^\beta(x).\]  
Therefore we get    
\begin{eqnarray*} F[-\log d(x)]&=&   \sup_{\alpha\in A}  
\left( b(x,\alpha)\cdot \frac{Dd(x)}{d(x)}+ \tr \left(a(x,\alpha)\frac{D^2 
d(x)}{d(x)}\right)-\frac{1}{d^2(x)} |\sigma^T (x,\alpha)  Dd(x) |^2\right) \\  
&\geq &  \frac{1}{d(x)} \left(b(x,\oa)\cdot Dd(x)+ \tr (a(x,\oa)D^2 
d(x)\right)-\frac{1}{d^2(x)}  
| \sigma^T (x,\oa) Dd(x)|^2\\ &\geq & 
kd^{\gamma-1}(x)-B^2d^{2\beta-2}(x).
 \end{eqnarray*}  Since $\gamma-1<2\beta-2$, 
we can further reduce $\delta$, independently of $x$, in such 
a way that 
$kd^{\gamma-1}-B^2d^{2\beta-2}>M$ in $\Omega_\delta$, 
concluding the proof of the proposition.
\end{proof}
If we replace \eqref{irrelevant} with \eqref{invariance}, then  the 
following stronger version of Proposition \ref{liap2} holds.
\begin{proposition}
\label{liap22}
Let  \eqref{reg2} and  \eqref{invariance} hold. Then 
 for every $M\geq 0$ there exists    $\delta>0$ such that    
\begin{equation}\label{invliap}F[\log d(x)]<-M, \qquad 
x\in\Omega_\delta.\end{equation}   
\end{proposition}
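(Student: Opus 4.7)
The plan is to mimic the computation in the proof of Proposition \ref{liap2}, but now for the function $\log d(x)$ instead of $-\log d(x)$, and to exploit the fact that the hypothesis \eqref{invariance} is uniform in $\alpha\in A$ (unlike \eqref{irrelevant}, which only provides one good $\oa$ per boundary point). This uniformity is exactly what is needed to turn a lower bound on the sup over $\alpha$ into an upper bound.

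First, I would fix $\delta>0$ so small that $d\in C^2(\overline{\Omega_\delta})$ and $|Dd|\equiv1$ there, and for each $x\in\Omega_\delta$ denote by $\ox\in\partial\Omega$ its unique projection, so that $Dd(x)=Dd(\ox)$. A direct computation gives
\[
D(\log d)=\frac{Dd}{d},\qquad D^2(\log d)=\frac{D^2 d}{d}-\frac{Dd\otimes Dd}{d^2},
\]
so that, for every $\alpha\in A$,
\[
-b(x,\alpha)\cdot D(\log d)-\tr\bigl(a(x,\alpha) D^2(\log d)\bigr)=-\frac{b(x,\alpha)\cdot Dd(x)+\tr(a(x,\alpha)D^2d(x))}{d(x)}+\frac{|\sigma^T(x,\alpha)Dd(x)|^2}{d^2(x)}.
\]

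Next, I would estimate both terms uniformly in $\alpha$ using \eqref{invariance}. The second condition of \eqref{invariance} gives, for every $\alpha$, the lower bound $b(x,\alpha)\cdot Dd(x)+\tr(a(x,\alpha)D^2d(x))\ge k\,d^{\gamma}(x)$ on $\Omega_\delta$. The first condition, combined with $Dd(x)=Dd(\ox)$ and the H\"older bound \eqref{reg2}, yields
\[
|\sigma^T(x,\alpha)Dd(x)|=|\sigma^T(x,\alpha)Dd(\ox)-\sigma^T(\ox,\alpha)Dd(\ox)|\le|\sigma(x,\alpha)-\sigma(\ox,\alpha)|\le B\,d^{\beta}(x),
\]
again uniformly in $\alpha$. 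Plugging these into the previous identity shows that, for \emph{every} $\alpha$,
\[
-b(x,\alpha)\cdot D(\log d)-\tr\bigl(a(x,\alpha) D^2(\log d)\bigr)\le -k\,d^{\gamma-1}(x)+B^2 d^{2\beta-2}(x).
\]
Taking the supremum over $\alpha\in A$ preserves this bound, yielding $F[\log d(x)]\le -k\,d^{\gamma-1}(x)+B^2 d^{2\beta-2}(x)$ in $\Omega_\delta$.

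Finally, since $\gamma<2\beta-1$ we have $\gamma-1<2\beta-2<0$, so the negative contribution $-k\,d^{\gamma-1}$ dominates the positive one $B^2 d^{2\beta-2}$ as $d\to 0^+$; given $M\ge 0$, a further reduction of $\delta$ (independent of $x$) makes the right-hand side strictly less than $-M$ throughout $\Omega_\delta$, establishing \eqref{invliap}. There is no real obstacle here: the only delicate point is ensuring that the bound on $|\sigma^T(x,\alpha)Dd(x)|$ is uniform in $\alpha$, which is precisely why the uniform-in-$\alpha$ form of \eqref{invariance} is assumed.
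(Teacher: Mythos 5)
Your proof is correct and takes essentially the same route as the paper: the paper writes $F[\log d]$ as $-\inf_{\alpha}$ of the expression and refers to an adaptation of the argument in Proposition \ref{liap2}, while you simply spell out that adaptation by bounding each term uniformly in $\alpha$ (which is exactly what the uniform-in-$\alpha$ hypothesis \eqref{invariance} makes possible) and then taking the supremum.
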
 
\begin{proof} By explicit computation, 
\[ F[\log d(x)]=  - \inf_{\alpha\in A}  
\left( b(x,\alpha)\cdot \frac{Dd(x)}{d(x)}+ \tr \left(a(x,\alpha)\frac{D^2 
d(x)}{d(x)}\right)-\frac{1}{d^2(x)} |\sigma^T (x,\alpha)  Dd(x) |^2\right). \]
The conclusion then follows from a straightforward adaptation of the argument 
in 
the proof of Proposition~\ref{liap2}.
\end{proof} 

We conclude this section with two comments about the optimality of condition 
\eqref{irrelevant} for the construction of the Lyapunov function.

\begin{remark}\label{rem:irr}
The function $-\log d(x)$ might not be a 
Lyapunov function if \eqref{irrelevant} is  relaxed to the following: for all
$\overline{x}\in\partial \Omega$ there is $\oa\in A$ for which
\begin{equation}\label{irrw} 
\sigma^T (\ox,\oa) Dd(\ox)=0,\qquad
b(\ox,\oa)\cdot Dd(\ox) +\tr(a(\ox,\oa) D^2d(\ox))> 0. \end{equation}  
Let us show this with an example.
Let $\Omega$ be a domain in $\R^2$ coinciding, in a
neighborhood of $(0,0)\in\partial \Omega$, with the half-plane $\{(x,y),\
x\in\R,\ y>0\}$. Take $A=\{1,2\}$, 
\[b(x,y,1)=
(0,1),\qquad \sigma(x,y,1)=\left(\begin{array}{cc} 1 & 0\\ 0 &
x^2+y\end{array}\right),\] 
 \[b(x,y,2)= (0,x^4-y),\qquad \sigma(x,y,2)=\left(\begin{array}{cc} y &
0\\ 0 & y\end{array}\right), \]
and $a=\sigma\sigma^T$. 
Note that in a neighborhood of $(0,0)$, $d(x,y)=y$, whence $Dd(x,y)=(0,1)$ and
$D^2 d(x,y)=0$. 
One can readily check that \eqref{irrw} holds at the points $(x,0)$, with 
$\oa=1$ if $x=0$ and $\oa=2$ otherwise, whereas 
the second condition in \eqref{irrelevant} is not verified at $\overline 
x=(0,0)$ by neither $\oa=1$ nor $\oa=2$. 
Let us check that $-\log d(x)$ is not a supersolution of $F=0$ in a
neighborhood of $(0,0)$. For $(x,y)\in\Omega$ close to $(0,0)$, we have that
\begin{eqnarray*} F[-\log d(x)]&=&   \max_{\alpha\in\{0,1\}}  
\left(
b(x,\alpha)\cdot \frac{Dd(x)}{d(x)}+ \tr\left(a(x,\alpha)\frac{D^2
d(x)}{d(x)}\right)-\frac{1}{d^2(x)} |\sigma^T (x,\alpha)  Dd(x) |^2\right) \\ 
&=&   \max  \left\{
\frac{1}{y}-\frac{(x^2+y)^2}{y^2},\frac{x^4}y-2\right\}.
 \end{eqnarray*} 
Now, if $y=x^4$, then 
$$\frac{x^4}y-2=-1<0,\qquad
\frac{1}{y}-\frac{(x^2+y)^2}{y^2}=\frac{1}{y}-\frac{(\sqrt 
y+y)^2}{y^2}<0.$$
This shows that $F[-\log d(x)]<0$ at some points arbitrarily close to $(0,0)$. 
\end{remark}

\begin{remark} 
Note that $\gamma$ in \eqref{irrelevant} satisfies $\gamma<1$, because 
$\beta\leq1$. If 
we allow $\gamma=1$, then we are only able to obtain a ``weak'' Lyapunov 
function, that is a strict supersolution to the 
equation in a neighborhood of the boundary, but not exploding at the boundary. 
Indeed if we assume  that the second condition in \eqref{irrelevant} holds with 
$\gamma=1$, then 
for every $\eta>0$, there exists $\delta >0$ such that 
\[F[-\log(d(x)+\eta)] > 0, \qquad  
x\in\Omega_\delta.\]  
This follows from the same arguments as in the proof of 
Proposition \ref{liap2}, the final step being now the choice of $\delta>0$ 
such that \[
\forall 
x\in\Omega_\delta,\qquad\frac{k d(x)}{ 
d(x)+\eta}-B^2\frac{d^{2\beta}(x)}{(d(x)+\eta)^{2}}>0.\] 
 \end{remark}
 
\subsection{Case of non-smooth boundary} 

When the domain $\Omega$ is not assumed to be $C^2$, a partial result in the 
direction of  Proposition~\ref{liap2} is obtained under 
the following assumption: for all $\ox\in\partial \Omega$, there exist  $\oa\in A$,
 a neighborhood $U$ of $\ox$ and a constant $k>0$ 
such that
\begin{equation}
\label{d_subsol}
-b(x,\oa)\cdot Dd(x)-\tr a(x,\oa)D^2d(x)\leq -k,\qquad x\in\Omega\cap U
\end{equation}
in viscosity sense, and  
\begin{equation}
\label{sigma_perp}
\forall z\in\partial\Omega\cap U ,\quad\forall\nu\in N(z) ,\qquad
\sigma^T(z,\oa) \,\nu=0 , \quad 
\end{equation}
where $N(z)$ is the interior normal cone to $\Omega$ at $z$, 
or, equivalently, the set of generalized exterior normals 
 to $\Omega^c$ as defined in \cite[p.~48]{BCD}. Namely, a unit vector $\nu$ 
belongs to $N(z)$, $z\in\partial\Omega$, if there exists $x\in\Omega$ such that 
$x=z+d(x)\nu$.  
Due to the possible lack of regularity of $d$, condition \eqref{d_subsol} has 
to be understood in the following sense:
\begin{equation}\label{pY}
\forall x\in\Omega\cap U,\quad \forall (p,Y)\in J^{2,+}d(x),\qquad
-b(x,\oa)\cdot p-\tr a(x,\oa)Y\leq -k,
\end{equation}
where \[J^{2,+}d(x):=\left\{(p,Y)\in\R^n\times 
\Si_n \ |\ \limsup_{\overline{\Omega}\ni y\to x} \frac{d(y)-d(x)-p\cdot 
(y-x)-\frac{1}{2}(y-x)Y\cdot (y-x)}{|y-x|^2}\leq 0\right\}.\]
If the boundary is smooth, 
\eqref{d_subsol} 
is equivalent to   \eqref{irr} whereas \eqref{sigma_perp} is slightly stronger 
than the first condition in \eqref{irrelevant}, so, combined, they imply 
condition \eqref{irrelevant} (see Remark \ref{SCirr}).
 
 The normal cone $N(z)$ and the set of projections
 \[
 P(x):=\{z\in\partial\Omega\,:\, |x-z|=d(x)\}
 \]
 allow us to give the following representation. 
\begin{lemma} 
\label{lemma_Dd}
For $x\in\Omega$, let $(p,Y)\in J^{2,+}d(x)$. 
Then $p=\lim_jp_j$ where for all $j$ there exist $\lambda_i\geq 0$, $i=1,...,N$, 
$\sum_{i=1}^N \lambda_i=1$, and $z_i\in P(x)$ such that
\[
p_j=\sum_{i=1}^N \lambda_i \nu_i , \qquad \nu_i\in N(z_i).
\]
\end{lemma}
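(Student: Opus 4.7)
The plan is to identify the first-order component of $J^{2,+}d(x)$ with the closed convex hull of $K(x) := \{\nu_z : z \in P(x)\}$, where $\nu_z := (x-z)/|x-z|$, and then conclude via Carathéodory's theorem. The second-order term $Y$ plays no role. Note that each $\nu_z$ belongs to $N(z)$ directly from the definition given in the excerpt, since $x = z + d(x)\nu_z$ when $z \in P(x)$, so in particular $K(x) \subseteq \bigcup_{z\in P(x)} N(z)$.

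The key technical step is to show that the one-sided directional derivative of $d$ at $x$ exists in every direction and equals
\[
d'(x;v) := \lim_{t\to 0^+}\frac{d(x+tv) - d(x)}{t} = \min_{z\in P(x)} \nu_z\cdot v.
\]
The upper bound $\leq$ is immediate: for every $z \in P(x)$, the estimate $d(y) \leq |y-z|$ (valid since $z\in\partial\Omega$) combined with a first-order Taylor expansion of $|y-z|$ at $y=x$ yields $d(x+tv) - d(x) \leq t\,\nu_z\cdot v + O(t^2)$. For the matching lower bound, select $z_t \in P(x+tv)$ (possible by compactness of $\partial\Omega$), use $|x-z_t|\geq d(x)$, and write
\[
d(x+tv) - d(x) \geq |x+tv - z_t| - |x - z_t| = \frac{2t\,v\cdot(x-z_t) + t^2|v|^2}{|x+tv - z_t| + |x - z_t|}.
\]
Extracting a subsequence along which $z_t \to z_*$, one has $|x - z_*| = d(x)$, i.e.\ $z_*\in P(x)$; dividing by $t$ and passing to the limit gives a lower bound $\nu_{z_*}\cdot v$, and consistency with the upper bound forces $\nu_{z_*}\cdot v = \min_{z\in P(x)} \nu_z\cdot v$, as well as the existence of the limit along the full sequence.

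Since $(p,Y) \in J^{2,+}d(x)$ implies $d'(x;v) \leq p\cdot v$ for every $v$, the formula above gives $p\cdot w \leq \max_{\nu\in K(x)} \nu\cdot w$ for every $w\in\R^n$ (set $w=-v$). The set $K(x)$ is compact as the continuous image of the compact set $P(x)$ under $z\mapsto \nu_z$, so $\mathrm{co}(K(x))$ is closed; the support function characterisation of closed convex sets then yields $p \in \mathrm{co}(K(x))$. Carathéodory's theorem expresses $p$ as a convex combination of at most $n+1$ elements of $K(x)$, which gives the representation in the statement (the sequence $(p_j)$ may in fact be taken constant). The main technical obstacle is the matching of upper and lower bounds for $d'(x;v)$: one must combine the subsequence extraction with the observation that the extracted $z_*$ necessarily realises the minimum, so that the limit is independent of the subsequence.
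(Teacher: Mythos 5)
Your proof is correct, but it takes a genuinely different route from the paper's. The paper dispatches the lemma in three lines: it notes that $(p,Y)\in J^{2,+}d(x)$ implies $p\in D^{+}d(x)$, cites Proposition~II.2.14 of \cite{BCD} for the identity $D^{+}d(x)=\overline{\mathrm{co}}\{(x-z)/|x-z| : z\in P(x)\}$, and observes that $z\in P(x)$ forces $(x-z)/|x-z|\in N(z)$. You instead \emph{re-derive} the cited proposition from scratch, via the exact formula $d'(x;v)=\min_{z\in P(x)}\nu_z\cdot v$ for the one-sided directional derivatives of the distance function, followed by the support-function characterisation of the compact convex set $\mathrm{co}(K(x))$. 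The details are sound: the upper bound from $d(\cdot)\leq|\cdot-z|$, the lower bound via projections $z_t\in P(x+tv)$ and a compactness/subsequence argument that simultaneously pins the limit down and shows it is subsequence-independent, and the final reduction $d'(x;v)\leq p\cdot v$ from the definition of $J^{2,+}$. What your approach buys is self-containedness and a marginally sharper conclusion: since $K(x)$ is compact in $\R^n$, $\mathrm{co}(K(x))$ is already closed, so by Carath\'eodory $p$ is an \emph{exact} convex combination of at most $n+1$ normals, and the approximating sequence $(p_j)$ in the statement can be taken constant. What the paper's route buys is brevity and an explicit pointer to the relevant nonsmooth-analysis literature. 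Both are valid; yours is the more informative for a reader who does not have \cite{BCD} at hand.
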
 
\begin{proof} The condition $(p,Y)\in J^{2,+}d(x)$ yields 
$$p\in D^+d(x):=\left\{q\in\R^n\ |\ \limsup_{\overline{\Omega}\ni y\to x} 
\frac{d(y)-d(x)-q\cdot (y-x)}{|y-x|}\leq 0\right\}.$$
Proposition II.2.14 in \cite{BCD} states that the set $D^+d(x)$ coincides with 
the closure of the convex hull of the set
\[
\left\{\frac{x-z}{|x-z|} \,:\, z\in P(x)\right\} .
\]
On the other hand, $z\in P(x)$ implies $x=z+d(x)\frac{x-z}{|x-z|}$ and therefore 
$\frac{x-z}{|x-z|}\in N(z)$.
\end{proof}  

\begin{proposition}
\label{liap3} Let \eqref{reg22}, \eqref{reg2},  \eqref{d_subsol},  \eqref{sigma_perp} hold. Then, for any $M\geq  0$, there exists $\delta>0$ such that 
$-\log(d(x))$ satisfies in viscosity sense
\[F[-\log(d(x)] >M, \qquad x\in 
\Omega_\delta.\]
\end{proposition}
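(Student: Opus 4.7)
My plan is to mirror the calculation of Proposition \ref{liap2} at the level of viscosity test functions, transferring the problem from the (possibly nonsmooth) function $-\log d$ onto $d$ itself via exponentiation, then invoking hypothesis \eqref{d_subsol}--\eqref{pY} together with Lemma \ref{lemma_Dd}, \eqref{sigma_perp} and the H\"older bound \eqref{reg2}. A preliminary compactness argument on $\partial\Omega$ secures a single control $\oa$ that is valid simultaneously at the testing point and at all of its boundary projections.

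First I prepare the covering. For each $\ox\in\partial\Omega$, \eqref{d_subsol}--\eqref{sigma_perp} furnish a triple $(U_{\ox},\oa_{\ox},k_{\ox})$. After shrinking each $U_{\ox}$ to a slightly smaller concentric $V_{\ox}$ and extracting a finite subcover of $\partial\Omega$ by the $V_{\ox_1},\dots,V_{\ox_N}$, I fix $\delta_0>0$ small enough that for every $x\in\Omega_{\delta_0}$ with some projection in $V_{\ox_n}$, the point $x$ itself and every element of $P(x)$ lie in the corresponding $U_{\ox_n}$. Set $k:=\min_n k_{\ox_n}>0$.

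For the main estimate I fix $x_0\in\Omega_\delta$, with $\delta\le\delta_0$ to be reduced below, and a jet $(p,Y)\in J^{2,-}(-\log d)(x_0)$; equivalently, a $C^2$ function $\varphi$ with $(D\varphi,D^2\varphi)(x_0)=(p,Y)$ and $\varphi\le-\log d$ near $x_0$ with equality at $x_0$. Exponentiating, $\rho:=e^{-\varphi}$ satisfies $\rho\ge d$ near $x_0$ and $\rho(x_0)=d(x_0)$, and the chain rule at $x_0$ gives
\[
\bigl(-d(x_0)p,\;-d(x_0)(Y-p\otimes p)\bigr)\in J^{2,+}d(x_0).
\]
Pick any $\ox_*\in P(x_0)$ and the index $n$ with $\ox_*\in V_{\ox_n}$; set $\oa:=\oa_{\ox_n}$ and $U:=U_{\ox_n}$. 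By the covering property both $x_0$ and every $z\in P(x_0)$ belong to $U$, so \eqref{pY} applies at $x_0$ with the above pair. Dividing through by $d(x_0)>0$ and using $\tr\bigl(a(x_0,\oa)\,p\otimes p\bigr)=|\sigma^T(x_0,\oa)p|^2$, one obtains
\[
-b(x_0,\oa)\cdot p-\tr\bigl(a(x_0,\oa)Y\bigr)\;\ge\;\frac{k}{d(x_0)}-\bigl|\sigma^T(x_0,\oa)p\bigr|^2.
\]
To control the last term, Lemma \ref{lemma_Dd} represents $p$ as a limit of convex combinations $\sum_\ell\lambda_\ell\nu_\ell$ of unit vectors $\nu_\ell\in N(z_\ell)$ at points $z_\ell\in P(x_0)\subset U$; then \eqref{sigma_perp} yields $\sigma^T(z_\ell,\oa)\nu_\ell=0$, and \eqref{reg2} gives $|\sigma^T(x_0,\oa)\nu_\ell|\le B|x_0-z_\ell|^\beta=B\,d(x_0)^\beta$. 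Averaging and passing to the limit, $|\sigma^T(x_0,\oa)p|\le B\,d(x_0)^\beta$, whence using $F[\varphi](x_0)\ge-b(x_0,\oa)\cdot p-\tr(a(x_0,\oa)Y)$,
\[
F[\varphi](x_0)\;\ge\;\frac{k}{d(x_0)}-B^2\,d(x_0)^{2\beta-2}.
\]
Since $\beta>1/2$ forces $2\beta-2>-1$, the right-hand side tends to $+\infty$ as $d(x_0)\to 0^+$, so it suffices to reduce $\delta\in(0,\delta_0]$, depending only on $M,k,B,\beta$, to obtain $F[\varphi](x_0)>M$ uniformly on $\Omega_\delta$, which is the claimed viscosity inequality.

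I expect the covering step to be the most delicate point: the choice of $\oa$ depends on which $V_{\ox_n}$ contains a distinguished projection of $x_0$, and it is essential that both \eqref{d_subsol} at $x_0$ and \eqref{sigma_perp} at \emph{every} element of $P(x_0)$ be available with the \emph{same} $\oa$, even when $P(x_0)$ is not a singleton. The jet transformation from $J^{2,-}(-\log d)$ to $J^{2,+}d$ is a routine chain-rule computation with the strictly monotone $C^2$ map $t\mapsto e^{-t}$ and should cause no real difficulty.
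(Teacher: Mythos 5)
Your proposal is correct and follows essentially the same path as the paper: pass from $J^{2,-}(-\log d)(x_0)$ to $J^{2,+}d(x_0)$, invoke \eqref{pY}, then bound $|\sigma^T(x_0,\oa)\,\cdot\,|$ on the gradient via Lemma~\ref{lemma_Dd}, \eqref{sigma_perp}, and \eqref{reg2}. Your version front-loads the compactness argument (fixing the finite cover, a uniform $k$, and $\delta_0$ so that $P(x_0)\subset U_{\ox_n}$ before estimating), whereas the paper does the estimate near a fixed $\ox$ and invokes compactness of $\partial\Omega$ only at the end; your presentation is somewhat cleaner since it makes fully explicit that the same $\oa$ must serve both \eqref{d_subsol} at $x_0$ and \eqref{sigma_perp} at \emph{all} of $P(x_0)$. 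One small slip: Lemma~\ref{lemma_Dd} gives the convex-combination representation for the $J^{2,+}d(x_0)$ gradient $-d(x_0)p$, not for $p$ itself, so the intermediate bound should read $|\sigma^T(x_0,\oa)(-d(x_0)p)|\le B\,d(x_0)^\beta$, i.e.\ $|\sigma^T(x_0,\oa)p|\le B\,d(x_0)^{\beta-1}$; squaring this yields $B^2 d(x_0)^{2\beta-2}$, which is indeed what you use in the final displayed inequality, so the error is only in the stated intermediate line and does not affect the conclusion.
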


\begin{proof} Note that \eqref{reg22}, \eqref{reg2} assure that the operator $F[u]$ is continuous.  For a fixed $\ox\in\partial\Omega$
take  $\overline{\alpha}\in A$ and the neighborhood $U$ such that 
\eqref{d_subsol} and  \eqref{sigma_perp} hold. For $x\in\Omega\cap U$ consider 
$(p,Y)\in J^{2,+}d(x)$. Take $p_j\to p$ as in Lemma \ref{lemma_Dd}. The 
orthogonality condition \eqref{sigma_perp} yields
\[
\sigma^T(x,\overline{\alpha}) p_j= \sum_{i=1}^N \lambda_i 
\sigma^T(x,\overline{\alpha}) \nu_i=\sum_{i=1}^N \lambda_i 
(\sigma^T(x,\overline{\alpha})-\sigma^T(z_i,\overline{\alpha}))\nu_i. 
\]
Then, by \eqref{reg2},
\[
|\sigma^T(x,\overline{\alpha}) p_j|  \leq \sum_{i=1}^N \lambda_i B 
|x-z_i|^\beta = B d^\beta(x),\]
from which, letting $j\to\infty$, 
\[
|\sigma^T(x,\overline{\alpha}) p|  \leq \ B d^\beta(x) .
\]

Now we want to prove that $F[-\log d(x)]>0$ in viscosity 
sense. Note that if $(q,Z)\in J^{2,-}(-\log d(x))$ then there exist $(p, Y)\in J^{2,+} 
d(x)$ such that  
$q=-\frac{p}{d(x)}$and $Z=-\frac{Y}{d(x)}+\frac{p\otimes p}{d^2(x)}$. 
Using \eqref{pY} we derive
 \[\begin{split} 
 F(x,p,Y) &= \sup_{\alpha\in A}  \left( b(x,\alpha)\cdot \frac{p}{d(x)}+ \tr 
a(x,\alpha)\frac{Y}{d(x)}-\frac{1}{d^2(x)}
   |\sigma^T(x,\alpha) p |^2\right)\\
& \geq \frac{1}{d(x)} \left(b(x,\oa)\cdot p+ \tr 
(a(x,\oa)Y\right)-\frac{1}{d^2(x)}
   | \sigma^T(x,\oa) p|^2 \\
&\geq \frac{1}{d(x)}\left(k
 -  B^2d^{2\beta-1}(x)
 \right).
   \end{split}\]
So, 
for given $M>0$, $\tilde\delta>0$ can be chosen
sufficiently small, depending on $\ox$ (because 
$k$ does) but independent of $x,p,Y$, in such a way that  
 $$F[-\log d(x)] > M, \qquad 
 x\in\Omega_{\tilde\delta}\cap U$$
 in viscosity sense. This means that there is a neighborhood $W$ of 
$\partial\Omega$ such that 
 $$F[-\log d(x)] > M, \qquad 
 x\in\Omega\cap W,$$
and thus the conclusion of the proposition holds for some 
$\delta>0$ by the compactness of $\partial\Omega$.
 \end{proof}
 \begin{remark}\upshape \label{remsuper} If we assume in Proposition \ref{liap3}  that conditions \eqref{d_subsol} and 
\eqref{sigma_perp} hold for every $\alpha\in A$, then we obtain that 
for any $M\geq  0$, there exists $\delta>0$ such that 
$-\log(d(x))$ satisfies in viscosity sense
\[ \inf_{\alpha\in A} \left(-b(x,\alpha)\cdot D(-\log d(x))- \tr 
\left(a(x,\alpha)D^2 (-\log d(x))\right)\right) >M, \qquad x\in 
\Omega_\delta.\]
\end{remark} 

\section{Nonexistence of nonconstant solutions}
\label{sec:Liouville}
Using the Lyapunov functions constructed in the previous section we now 
prove that $F[u]=0$ has only trivial sub and supersolutions. 
\begin{proof}[Proof of Teorem \ref{liouville}] 
Let  $\delta$ be from Proposition \ref{liap2} corresponding to $M=0$. We can assume without loss of 
generality that $\delta<1$.
Define $\Omega_\delta$ as in \eqref{odelta} and 
$$K_{\delta}:=\max_{\Omega\backslash\Omega_\delta}\,u.$$ 
For $\e>0$ the function $u(x)-(K_{\delta}-\e\log d(x))$ is negative when 
$d(x)=\delta$ and, by \eqref{sublog}, it goes to $-\infty$ as 
$x\to\partial\Omega$. Suppose by contradiction that it is positive somewhere in 
$\Omega_\delta$. Then there exists $x_0\in\Omega_\delta$ such that
\[u(x_0)-(K_{\delta}-\eps \log d(x_0))=  \max_{x\in 
\Omega_\delta} [u(x)-(K_{\delta}-\e\log d(x))].\]  
Since $u$ is a subsolution to $F[u]=0$, this implies that
\[ \eps F[-\log d(x_0)]\leq 0,\] which is impossible by Proposition \ref{liap2}.
We have therefore shown that
\[
\forall x\in\Omega_\delta,\qquad
u(x)\leq K_{\delta}-\eps \log d(x),\]
whence, letting $\e\to0$, $u\leq K_\delta$ in 
$\Omega_\delta$. This means that $u$ achieves its maximum $K_\delta$ 
inside $\Omega$ and then it is constant since $F$ satisfies the Strong Maximum 
Principle in $\Omega$. 
\end{proof}

In the case of nonsmooth boundary, the same result holds true, under stronger 
regularity assumptions on the coefficients. 
\begin{proposition}
\label{L-nonsmooth}  
Instead of $\partial \Omega$ of class $C^2$ assume that  any $\ox\in\partial \Omega$ has a a neighborhood $U$ such that \eqref{d_subsol} and \eqref{sigma_perp} hold for some $\oa\in A$ and $k>0$.
Suppose also that $b,\sigma$ are 
Lipschitz continuous, i.e., 
the continuity modulus in \eqref{reg22} is $\omega(r)=Br$ for some $B>0$ and
 $\beta=1$ in \eqref{reg2}, 
 and $F$ satisfies the Strong Maximum  Principle in $\Omega$.  
If $u\in USC(\Omega)$  is a  viscosity subsolution to  
$F[u]=0$ satisfying  \eqref{sublog}
then $u$ is constant.   
\end{proposition}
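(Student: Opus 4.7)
My plan is to imitate the proof of Theorem \ref{liouville}, substituting Proposition \ref{liap3} for Proposition \ref{liap2}; the only new difficulty is that the Lyapunov function $-\log d$ is merely a viscosity supersolution and not a $C^2$ function, so I cannot plug it directly into the subsolution inequality for $u$ at an interior maximum, and I must run a doubling-of-variables / Ishii-lemma comparison instead. Fix $M>0$ and let $\delta\in(0,1)$ be furnished by Proposition \ref{liap3}, so that $-\log d$ is a viscosity supersolution of $F[v]\ge M$ in $\Omega_\delta$. Set $K_\delta:=\max_{\overline\Omega\setminus\Omega_\delta}u$, which is finite by upper semicontinuity. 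Since $F$ is positively homogeneous of degree one in $(Du,D^2u)$, for every $\eps>0$ the function $\phi_\eps(x):=K_\delta-\eps\log d(x)$ is a viscosity supersolution of $F[\phi_\eps]\ge \eps M$ in $\Omega_\delta$; moreover $u\le\phi_\eps$ on $\{d=\delta\}$ by definition of $K_\delta$, and $u-\phi_\eps\to-\infty$ as $x\to\partial\Omega$ by \eqref{sublog}.

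I claim $u\le\phi_\eps$ in $\Omega_\delta$. Assume the contrary: then $u-\phi_\eps$ attains a strictly positive maximum on some compact subset $K$ of $\Omega_\delta$ bounded away from both $\{d=\delta\}$ and $\partial\Omega$. For $n\in\N$ let $(x_n,y_n)$ maximize
\[
\Phi_n(x,y):=u(x)-\phi_\eps(y)-\frac{n}{2}|x-y|^2
\]
over $\overline\Omega_\delta\times\overline\Omega_\delta$; standard viscosity-solution compactness gives that, for $n$ large, $x_n,y_n$ lie in the interior of $\Omega_\delta$, converge to a common maximizer $x^\star$ of $u-\phi_\eps$, and satisfy $n|x_n-y_n|^2\to 0$. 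The Crandall--Ishii--Lions theorem on sums yields $X_n,Y_n\in\Si_n$ with
\[
(p_n,X_n)\in\overline J^{2,+}u(x_n),\qquad (p_n,Y_n)\in\overline J^{2,-}\phi_\eps(y_n),\qquad p_n:=n(x_n-y_n),
\]
and the standard matrix inequality $\bigl(\begin{smallmatrix}X_n&0\\0&-Y_n\end{smallmatrix}\bigr)\le 3n\bigl(\begin{smallmatrix}I&-I\\-I&I\end{smallmatrix}\bigr)$. Writing the subsolution inequality for $u$ and the supersolution inequality for $\phi_\eps$,
\[
F(x_n,p_n,X_n)\le 0,\qquad F(y_n,p_n,Y_n)\ge \eps M,
\]
and subtracting, the HJB sup-structure gives
\[
\eps M\le \sup_{\alpha\in A}\Bigl\{(b(x_n,\alpha)-b(y_n,\alpha))\cdot p_n+\tr\bigl(a(x_n,\alpha)X_n-a(y_n,\alpha)Y_n\bigr)\Bigr\}.
\]
The Lipschitz bound on $b$ controls the first term by $Bn|x_n-y_n|^2$; the matrix inequality together with the Lipschitz regularity of $\sigma$ (i.e.\ $\beta=1$ in \eqref{reg2}) gives the classical estimate $\tr(a(x_n,\alpha)X_n-a(y_n,\alpha)Y_n)\le 3n|\sigma(x_n,\alpha)-\sigma(y_n,\alpha)|^2\le 3nB^2|x_n-y_n|^2$. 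Hence $\eps M\le C\,n|x_n-y_n|^2\to 0$, a contradiction. Letting $\eps\to0$ gives $u\le K_\delta$ in $\Omega_\delta$, so $u$ attains its maximum at an interior point of $\Omega\setminus\Omega_\delta$ and the Strong Maximum Principle forces $u$ to be constant.

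The main obstacle, and the point at which the hypotheses $\omega(r)=Br$ and $\beta=1$ are really used, is the Ishii-lemma step above: these are precisely what is needed so that both $(b(x_n)-b(y_n))\cdot p_n$ and $\tr(a(x_n)X_n-a(y_n)Y_n)$ can be controlled by $n|x_n-y_n|^2$, which vanishes in the limit. Everything else in the argument (the homogeneity scaling of the Lyapunov function, the boundary blow-up from \eqref{sublog}, and the final appeal to the Strong Maximum Principle) is exactly parallel to the smooth-boundary proof.
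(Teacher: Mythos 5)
Your proof is correct and follows essentially the same route as the paper: replace the $C^2$ plug-in at a touching point by a doubling-of-variables argument with Ishii's lemma, use the Lipschitz continuity of $b$ and $\sigma$ (that is, $\omega(r)=Br$ and $\beta=1$) to bound $(b(x_n,\alpha)-b(y_n,\alpha))\cdot p_n$ and $\tr(a(x_n,\alpha)X_n-a(y_n,\alpha)Y_n)$ by a multiple of $n|x_n-y_n|^2\to 0$, and contradict the strict viscosity supersolution inequality from Proposition \ref{liap3}; the paper phrases the penalized test function as $K_\delta-\eps\log d(y)$ and separates the cases $\Phi\le 0$ / $\Phi>0$ rather than assuming a positive maximum outright, but these are cosmetic differences. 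No gap.
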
 
\begin{proof} We use  the same notation as in Theorem \ref{liouville}. Now, 
owing to Proposition \ref{liap3}, $0<\delta<1$ is chosen in such a way that 
$-\log d(x)$ satisfies in viscosity sense $F[-\log d(x)]> 1$ in 
$\Omega_\delta$. Using the doubling variables method 
we 
define, for $\eta>0$ and $x,y\in \Omega_\delta$,
\[\Phi(x,y):= u(x)-K_\delta +\eps \log d(y)- \frac{|x-y|^2}{2\eta}.\] Observe 
that, due to \eqref{sublog}, $\sup_{ 
(\Omega_\delta)^2}\Phi<+\infty$.
Take $(x_\eta, y_\eta)$ small such that 
$\Phi(x_\eta, 
y_\eta)=\sup_{ \Omega_\delta^2}\Phi$.  By \cite[Lemma 3.1]{cil}, $\frac{|x_\eta-y_\eta|^2}{\eta}\to 0$ as $\eta\to 0$, and up to subsequences, $x_\eta, 
y_\eta\to x\in \Omega_\delta$ such that $u(x)-K_\delta+\eps\log d(x)=\sup_{ \Omega_\delta}u-K_\delta+\eps\log d$. 

If   $\Phi(x_\eta, y_\eta)\leq 0$ then $\Phi(x,x)\leq 0$ for $x\in\Omega_\delta$, from which, arguing as in the proof of
Theorem \ref{liouville}, we infer
that $u$ achieves a maximum in $\Omega$ and then it is constant. 

Suppose then that $\Phi(x_\eta, y_\eta)> 0$ for all $\eta>0$. By classical 
argument  (see \cite[Theorem 3.2]{cil}), we get that 
 there exist $X,  Y\in\Si_n$ such that 
$\left(\frac{ x_\eta-y_\eta }{ \eta}, X\right)\in J^{2,+}u(x_\eta)$, 
$\left(\frac{ x_\eta-y_\eta }{ \eta},  Y\right)\in J^{2,-}(-\eps\log d(y_\eta))$ 
and  \begin{equation}\label{pq} pX \cdot p-qY\cdot q\leq \frac{1}{\eta}|p-q|^2\qquad \forall\ p,q\in\R^n.\end{equation}
Let $e_i$, for $i=1, \dots, r$ the $i$-th unit vector. Then  it is easy to check  for every $\alpha\in A$, 
$\tr a(x_\eta,\alpha)X= \sum_{i=1}^r (\sigma(x_\eta,\alpha)e_i)X\cdot (\sigma(x_\eta,\alpha)e_i)$. 
So, by \eqref{pq} applied to $p= \sigma(x_\eta,\alpha)e_i,  q=\sigma(y_\eta,\alpha)e_i$, we get  that   
for every $\alpha\in A$, 
\[\tr a(x_\eta,\alpha)X-\tr a(y_\eta,\alpha)Y \leq \frac{1}{\eta} \sum_{i=1}^r\sum_{j=1}^n|\sigma_{ji}(x_\eta,\alpha)-\sigma_{ji}(y_\eta,\alpha)|^2\leq   B^2\frac{|x_\eta-y_\eta|^2}{\eta}\]
where we used the fact that $\sigma$ is Lischitz continuous.
Using the fact that $b$ is Lipschitz continuous we get 
\[|b(y_\eta,\alpha)-b(x_\eta, \alpha)| \frac{ |x_\eta-y_\eta |}{ \eta}	\leq B\left(\frac{ 
|x_\eta-y_\eta |^2}{ \eta}\right).\]
Using the previous inequalities and the fact that $u$ is a subsolution to $F[u]=0$, we get
\begin{eqnarray*}0& \geq &\eps \sup_{\alpha\in A} \left(-b(x_\eta,\alpha)\cdot 
\frac{ x_\eta-y_\eta }{ \eta}- \tr 
a(x_\eta,\alpha) X\right)
\\ & \geq & \eps \sup_{\alpha\in A} \left(-b(y_\eta,\alpha)\cdot \frac{ 
x_\eta-y_\eta }{ \eta}- \tr 
(a(y_\eta,\alpha) Y)\right)- B\eps\frac{|x_\eta-y_\eta|^2}{\eta}- \eps B^2\left(\frac{ 
|x_\eta-y_\eta |^2}{ \eta}\right) ,\end{eqnarray*} which gives, for $\eta$ sufficiently 
small,  a contradiction to the fact that $F[-\log d(x)]>1$ in viscosity sense 
in $\Omega_\delta$.
\end{proof}

If we strengthen condition \eqref{irrelevant} to condition \eqref{invariance}, and let $F[u]$ satisfies the Strong Minimum Principle in $\Omega$, that is $-F[-u]$ satisfies the Strong Maximum Principle in $\Omega$,  we 
also obtain the statement for supersolutions.
\begin{theorem}\label{liouville2}
Let conditions \eqref{reg22}, \eqref{reg2}, \eqref{invariance} hold and let $F$ 
satisfies the Strong Minimum Principle in $\Omega$. If $v\in LSC(\Omega)$ is a  
viscosity supersolution to  
$F[v]=0$ satisfying  
\begin{equation}\label{superlog}
\liminf_{x\to\partial\om} \frac{v(x)}{- \log d(x)}\geq 0,\end{equation} 
then $v$ is constant.  
\end{theorem}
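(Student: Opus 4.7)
\medskip

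\noindent\textbf{Proof plan for Theorem \ref{liouville2}.} The plan is to mirror step-by-step the proof of Theorem \ref{liouville}, replacing the supersolution $-\log d$ of Proposition \ref{liap2} by the subsolution $\log d$ provided by Proposition \ref{liap22} (which is exactly what condition \eqref{invariance} buys us), and replacing the Strong Maximum Principle by the Strong Minimum Principle. Concretely, I fix $\delta\in(0,1)$ as in Proposition \ref{liap22} with $M=0$, so that $F[\log d(x)]<0$ on $\Omega_\delta$ in the viscosity sense, set
\[
K_\delta:=\min_{\Omega\setminus\Omega_\delta}v
\]
(finite since $v$ is LSC on a compact set), and compare $v$ with the family of test functions $K_\delta+\e\log d(x)$ on $\Omega_\delta$.

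For fixed $\e>0$, I would study the function $\phi(x):=v(x)-K_\delta-\e\log d(x)$ on $\overline{\Omega_\delta}\cap\Omega$. On the level set $\{d=\delta\}\subset\Omega\setminus\Omega_\delta$ one has $v\geq K_\delta$ and $-\e\log\delta>0$, so $\phi>0$ there. As $x\to\partial\Omega$, the boundary condition \eqref{superlog} implies, for every $\eta\in(0,\e)$, that $v(x)\geq\eta\log d(x)$ near $\partial\Omega$, so
\[
\phi(x)\geq(\eta-\e)\log d(x)-K_\delta\longrightarrow+\infty,
\]
because $\eta-\e<0$ and $\log d(x)\to-\infty$. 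Suppose for contradiction that $\phi<0$ at some point of $\Omega_\delta$. Since $\phi$ is LSC, is strictly positive on $\partial\Omega_\delta\cap\Omega$, and tends to $+\infty$ at $\partial\Omega$, it attains its (negative) infimum at an interior point $x_0\in\Omega_\delta$. Then $\e\log d$ (plus the constant $K_\delta+\phi(x_0)$) touches $v$ from below at $x_0$, so the supersolution property of $v$ gives $\e F[\log d](x_0)\geq 0$, contradicting Proposition \ref{liap22}.

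Letting $\e\to 0$ in the resulting inequality $v(x)\geq K_\delta+\e\log d(x)$ on $\Omega_\delta$ yields $v\geq K_\delta$ on all of $\Omega$; hence the minimum $K_\delta$ of $v$ over $\overline\Omega\cap\Omega$ is attained at an interior point, and the Strong Minimum Principle (i.e.\ the Strong Maximum Principle for $-F[-\cdot]$) forces $v$ to be constant. The only non-routine point is checking that the boundary hypothesis \eqref{superlog} really produces $\phi\to+\infty$ at $\partial\Omega$; this is where the interplay between the sign $-\e\log d(x)\to+\infty$ and the one-sided growth bound on $v$ is used, via the choice $\eta<\e$ above. Everything else is a direct transcription, by duality $u\leftrightarrow -v$, of the argument already written for Theorem \ref{liouville}.
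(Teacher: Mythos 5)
Your proposal is correct and follows essentially the same approach as the paper: the paper's proof is a one-line reduction to Theorem \ref{liouville} via the duality $v \leftrightarrow -v$ (using the Strong Minimum Principle for $F$, equivalently the Strong Maximum Principle for $-F[-\cdot]$, and replacing Proposition \ref{liap2} with Proposition \ref{liap22}), while you write out the same comparison argument directly for $v$ using the subsolution $\log d$. The details you supply — attainment of the negative infimum by LSC plus the $+\infty$ boundary behavior, positive homogeneity of $F$ to reduce $F[\e\log d]\ge 0$ to $\e F[\log d]\ge 0$, and the choice $\eta<\e$ in unpacking \eqref{superlog} — are all accurate.
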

\begin{proof} Note that $v$ is a subsolution to $-F[-u]=0$. So, we conclude, using the same arguments as in the proof of Theorem 
\ref{liouville},  by substituting  Proposition \ref{liap2} with Proposition \ref{liap22}.\end{proof} 

\begin{remark}\label{remcrescita} 
Theorems \ref{liouville} and \ref{liouville2} can be improved by replacing 
 the growth conditions \eqref{sublog} and \eqref{superlog} with, respectively, 
\[\limsup_{x\to\partial\om}  u(x) d(x)^\kappa \leq 0,  \qquad \text{ and }\qquad\liminf_{x\to\partial\om}  v(x) d(x)^\kappa \geq 0\]
for some $\kappa>0$. 

Indeed, observe that for every $M>0$ and every $\kappa>0$, there exists $\delta>0$ such that  
\begin{eqnarray*} F[d(x)^{-\kappa}] &=&   \frac{\kappa}{d(x)^\kappa}  \sup_{\alpha\in A}  
\left( b(x,\alpha)\cdot \frac{Dd(x)}{d(x)}+ \tr \left(a(x,\alpha)\frac{D^2 
d(x)}{d(x)}\right)-\frac{\kappa+1}{d^2(x)} |\sigma^T (x,\alpha)  Dd(x) |^2\right) \\   &\geq & 
k\kappa d^{\gamma-1-\kappa}(x)-B^2\kappa(\kappa+1)d^{2\beta-2-\kappa}(x) \geq M \end{eqnarray*} 
for every $x\in \Omega_\delta$, by assumption \eqref{irrelevant}. 
Analogoulsy we get that for every $M>0$ and every $\kappa>0$ there exists $\delta>0$ such that $F[-d(x)^{-\kappa}]\leq -M$ for every $x\in\Omega_\delta$. 

So, the same arguments of the proofs of Theorems \ref{liouville} and \ref{liouville2} can be repeated by simply substituting $d(x)^{-\kappa}$ to $-\log(d(x))$. 
\end{remark} 

We state the analogous of Theorem \ref{liouville} in the case of nonsmooth boundary. \begin{proposition}
\label{L-nonsmooth2}  
Instead of $\partial \Omega$ of class $C^2$ assume that  any $\ox\in\partial \Omega$ has a a neighborhood $U$ such that \eqref{d_subsol} and \eqref{sigma_perp} hold for every $\alpha\in A$ and $k>0$.
Suppose also that $b,\sigma$ are 
Lipschitz continuous, i.e., 
the continuity modulus in \eqref{reg22} is $\omega(r)=Br$ for some $B>0$ and
 $\beta=1$ in \eqref{reg2},  
 and $F$ satisfies the Strong Minimum  Principle in $\Omega$.  
If $v\in LSC(\Omega)$  is a  viscosity supersolution to  
$F[v]=0$ satisfying  \eqref{superlog}
then $v$ is constant.   
\end{proposition}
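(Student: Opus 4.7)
The plan is to reduce Proposition \ref{L-nonsmooth2} to the already proved subsolution statement, Proposition \ref{L-nonsmooth}, by dualizing. First I would observe that $w:=-v\in USC(\Omega)$ is a viscosity subsolution of the HJB-type operator
\[\tilde{F}[u] := -F[-u] = \inf_{\alpha\in A}\bigl(-b(x,\alpha)\cdot Du - \tr(a(x,\alpha)D^2u)\bigr),\]
which is structurally identical to $F$ but with $\inf$ in place of $\sup$. The Strong Minimum Principle for $F$ is, by definition, the Strong Maximum Principle for $\tilde F$, and the boundary decay condition \eqref{superlog} on $v$ translates exactly into \eqref{sublog} on $w$.

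Next I would invoke Remark \ref{remsuper}, which is tailored precisely to the strengthened hypotheses that \eqref{d_subsol} and \eqref{sigma_perp} hold for every $\alpha\in A$: it asserts that, for any $M>0$, there exists $\delta>0$ such that $-\log d(x)$ is a strict viscosity supersolution of $\tilde F[u]=M$ in $\Omega_\delta$. This is the Lyapunov function needed to run the doubling-variables scheme.

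Finally I would rerun the doubling-variables argument from the proof of Proposition \ref{L-nonsmooth} verbatim, with $w$ replacing $u$, $\tilde F$ replacing $F$, and the strict supersolution property for $\tilde F$ replacing its $F$-counterpart from Proposition \ref{liap3}. The Lipschitz estimates used there, stemming from \eqref{reg22} with $\omega(r)=Br$ and from \eqref{reg2} with $\beta=1$, depend only on uniform $\alpha$-bounds on the increments of $b$ and $\sigma$, hence they control $|\inf_\alpha f_\alpha(x)-\inf_\alpha f_\alpha(y)|$ exactly as well as the $\sup_\alpha$ counterpart. With $K_\delta:=\max_{\Omega\setminus\Omega_\delta}w=-\min_{\Omega\setminus\Omega_\delta}v$ and the coupling function $\Phi(x,y):=w(x)-K_\delta+\eps\log d(y)-|x-y|^2/(2\eta)$, one concludes either that $\Phi\le 0$ everywhere on $\Omega_\delta^2$ (whence, letting $\eps\to 0$, $w$ attains an interior maximum and is constant by the Strong Maximum Principle for $\tilde F$), or that Ishii's lemma combined with the Lipschitz estimates contradicts the strict supersolution inequality for $-\log d$ once $\eta$ is small enough.

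No genuinely new obstacle arises compared with Proposition \ref{L-nonsmooth}; the only point requiring care is the bookkeeping of signs when passing from $v$ to $w=-v$ and when applying Ishii's lemma to $\Phi$. Both are routine, and the Lipschitz hypothesis on $b$ and $\sigma$ is used in exactly the same way as in the smooth-boundary subsolution argument, to absorb the terms arising from the lack of $C^2$ regularity of $d$ near the non-smooth part of $\partial\Omega$.
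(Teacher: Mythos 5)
Your proposal is correct and matches the paper's proof almost verbatim: both arguments pass from $v$ to $w=-v$, observe that $w$ is an $USC$ subsolution of $G[u]:=\inf_{\alpha\in A}(-b\cdot Du-\tr(aD^2u))=-F[-u]=0$ satisfying \eqref{sublog}, note that the Strong Minimum Principle for $F$ is precisely the Strong Maximum Principle for $G$, invoke Remark \ref{remsuper} for the Lyapunov function, and rerun the doubling-variables argument of Proposition \ref{L-nonsmooth} with $G$ in place of $F$. The only difference is that the paper leaves the doubling-variables rerun implicit, while you spell out that the Lipschitz estimates and Ishii's lemma apply equally to the $\inf_\alpha$ operator; that is the same argument.
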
 
\begin{proof} Observe that $-v$ is a viscosity subsolution to 
\[ \inf_{\alpha\in A} \left(-b(x,\alpha)\cdot Du- \tr 
\left(a(x,\alpha)D^2 u\right)\right) =0 \] and satisfies \eqref{sublog}. Moreover by assumption, the operator 
\[ G[u]=\inf_{\alpha\in A} \left(-b(x,\alpha)\cdot Du- \tr 
\left(a(x,\alpha)D^2 u \right)\right)\] satisfies the Strong Maximum Principle. So, recalling Remark \ref{remsuper}, we can repeat the proof of Proposition \ref{L-nonsmooth} by substituting the operator $F[u]$ with  the operator $G[u]$.
 \end{proof}

\section{An application to 
 stochastic control problems with exit times} 
 \label{sec:exit}
  We consider the control system \eqref{sys},
 where  $W_t$ is a $r-$dimensional standard Brownian motion and the control
$\alpha_\cdot:\R_+\to A$ belongs to the set of {\em admissible controls}
$\mathcal{A}$, namely, 
 progressively measurable processes, with respect to the 
filtration associated to the Brownian motion.

In order to have existence and uniqueness of solutions to the control system, throughout  this section we will strenghten the regularity assumptions on the coefficients to the following.  
We assume  there exists $B>0$ for which 
\begin{equation}\label{reg22s}
\forall x,y\in \overline{\Omega},\ \alpha\in A,\qquad
|b(x,\alpha)-b(y,\alpha)|\leq B|x-y|,\qquad
 |\sigma(x,\alpha)-\sigma(y,\alpha)|\leq B|x-y|.\end{equation} 

Moreover, to ensure existence of  optimal controls for the optimal control problems we are going to consider, 
we assume that the set $A$ is compact and 
 \begin{equation}\label{compact} \{(b(x,\alpha),a(x,\alpha)) \ |\ \alpha\in A\} 
\ \text{ is convex  for all }\ x\in \overline{\Omega}. \end{equation}

Define  for every
$x\in \Omega$ the exit time from the open set $\Omega$ \begin{equation}\label{ex}\tau^{\alpha_\cdot}_x= \inf\{t\geq 0 \ | \ 
X_t^{\alpha_\cdot}\not\in \Omega\}\in\R \cup \{+\infty\}. \end{equation} 

It has been proved in \cite{bj} that under the previous assumptions,
the set $\overline{\Omega}$ is {\em viable} or {\em weakly invariant} 
for the  control system \eqref{sys}  in the following sense:
for every $x\in \overline\Omega$ there exists an admissible control $\alpha_\cdot$ such 
that $X_t^{\alpha_\cdot}\in \overline{ \Omega}$  almost surely for all $t\geq 0$.
In the next proposition we prove that actually in our case this result can be improved to get also the viability of the open set $\Omega$. 
Such a result was 
 obtained  in \cite{cdf}
for stochastic systems without control. 
\begin{proposition} \label{viable} Let  \eqref{irrelevant}, 
\eqref{reg22s}, \eqref{compact} hold. 
Then   for every $x\in\Omega$ there exists an admissible control $\alpha_\cdot\in\mathcal{A}$ such 
that $X_t^{\alpha_\cdot}\in \Omega$  almost surely for all $t\geq 0$, i.e. $\tau^{\alpha_\cdot}_x=+\infty$ almost surely. \end{proposition}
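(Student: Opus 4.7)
My plan is to use the Lyapunov function $\varphi(x):=-\log d(x)$, which by Proposition \ref{liap2} is a classical strict supersolution of $F[u]=0$ in $\Omega_\delta$ for some small $\delta>0$. Since $F[\varphi]>0$ there, at each $x\in\Omega_\delta$ there exists $\alpha\in A$ making $L^\alpha\varphi(x):=b(x,\alpha)\cdot D\varphi(x)+\tr(a(x,\alpha)D^2\varphi(x))$ strongly negative; adopting such an $\alpha$ as a feedback should prevent the controlled trajectory from reaching $\partial\Omega$.

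The first step is to select a Borel feedback $\alpha^*:\overline\Omega\to A$ realising this property. The set-valued map $\ox\mapsto\{\alpha\in A:\eqref{irrelevant}\text{ holds}\}$ has nonempty values and closed graph, hence admits a Borel measurable selector $\oa:\partial\Omega\to A$ by a standard selection theorem (e.g.\ Kuratowski--Ryll-Nardzewski, using that $A$ is compact). Extend it to $\Omega_\delta$ by $\alpha^*(x):=\oa(\pi(x))$, where $\pi\in C^1(\overline{\Omega_\delta};\partial\Omega)$ is the nearest-point projection (smooth since $\partial\Omega\in C^2$), and extend $\alpha^*$ arbitrarily as a Borel map off $\Omega_\delta$. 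Starting from $x\in\Omega$, I would then produce an admissible $\alpha_\cdot\in\mathcal{A}$ tracking the feedback $\alpha^*$, for instance by constructing weak solutions of \eqref{sys} driven by the piecewise-constant-in-time approximations $\alpha^*(X_{t_k^n})$ and passing to the limit (which is tight by the uniform bounds on the coefficients), using \cite{bj} to guarantee the ambient viability of $\overline\Omega$ at each stage.

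With $\alpha_\cdot$ at hand, set $\tau_n:=\inf\{t\ge 0:\ d(X_t^{\alpha_\cdot})\le 1/n\}$ and $\tau:=\lim_n\tau_n\le \tau_x^{\alpha_\cdot}$; the goal is $\tau=+\infty$ a.s. Pick $\tilde\varphi\in C^2(\overline\Omega)$ that coincides with $-\log d$ on $\Omega_{\delta/2}$ (possible because $d\in C^2(\overline{\Omega_\delta})$). Applying It\^o's formula to $\tilde\varphi(X_{t\wedge\tau_n}^{\alpha_\cdot})$ and taking expectations (the It\^o integral is a genuine martingale since $\sigma$ and $D\tilde\varphi$ are bounded on $\{d\ge 1/n\}$), the computation in the proof of Proposition~\ref{liap2} yields
\[
L^{\alpha^*(X_s)}\tilde\varphi(X_s)\le -k\,d^{\gamma-1}(X_s)+B^2 d^{2\beta-2}(X_s)\le 0\qquad\text{for }X_s\in\Omega_{\delta/2}
\]
(after shrinking $\delta$ if necessary), while $L^{\alpha^*}\tilde\varphi$ is uniformly bounded on $\Omega\setminus\Omega_{\delta/2}$. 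Hence there exists $C>0$ independent of $n$ with
\[
\E\bigl[\tilde\varphi(X_{t\wedge\tau_n}^{\alpha_\cdot})\bigr]\le \tilde\varphi(x)+Ct.
\]
Since $\tilde\varphi(X_{\tau_n}^{\alpha_\cdot})=\log n$ on $\{\tau_n\le t\}$ (for $n\ge 2/\delta$) and $\tilde\varphi\ge -M_0$ on $\overline\Omega$ for some $M_0$, Markov's inequality gives $\P(\tau_n\le t)\le (\tilde\varphi(x)+Ct+M_0)/\log n\to 0$. As $t>0$ is arbitrary, $\tau_x^{\alpha_\cdot}\ge\tau=+\infty$ a.s.

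The principal technical obstacle is the second step: upgrading the discontinuous Borel feedback $\alpha^*$ to an honest admissible control, i.e.\ to a weak solution of \eqref{sys} adapted to the Brownian filtration. This calls on either a weak-existence result for controlled SDEs with measurable coefficients, or on the time-discretisation/limit argument sketched above combined with the viability of $\overline\Omega$ from \cite{bj}. Once this is granted, the remaining estimate is essentially a probabilistic re-reading of Proposition \ref{liap2}.
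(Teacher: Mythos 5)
Your Lyapunov argument and Itô estimate are the right intuition, and you correctly identify the weak point: turning the Borel feedback $\alpha^*$ into an honest admissible control in $\mathcal{A}$. This is not a technical nuisance but a genuine gap. With a degenerate, merely Lipschitz $\sigma$ and a discontinuous feedback $\alpha^*$, there is no off-the-shelf weak-existence theorem producing a solution of \eqref{sys} that is also progressively measurable with respect to the Brownian filtration (the definition of $\mathcal{A}$ here); a limit of Euler-type discretizations will typically be a weak solution on an enlarged probability space with a control adapted to the state, not to $W$. Moreover the discretized trajectories are not guaranteed to stay in $\overline\Omega$ while the approximation is running, so invoking viability from \cite{bj} ``at each stage'' does not close the loop. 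Without fixing this, you never obtain the single control $\alpha_\cdot$ on which the rest of your argument (Itô, Markov, letting $t\to\infty$) depends.

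The paper sidesteps the feedback-construction problem entirely. It applies the \emph{superoptimality principle} for viscosity supersolutions (\cite[Corollary 6]{c}), which yields directly
\[
V(x)\;\geq\;\inf_{\alpha_\cdot\in\mathcal{A}}\,\E\!\left[V\big(X^{\alpha_\cdot}_{\tau^{\delta,\alpha_\cdot}_x\wedge t}\big)-C\big(\tau^{\delta,\alpha_\cdot}_x\wedge t\big)\right],
\]
an estimate on the \emph{infimum over all admissible controls}, with no need to exhibit a near-optimal one. This gives $\inf_{\alpha_\cdot}\P(\tau^{\delta,\alpha_\cdot}_x\leq t)\leq (V(x)+Ct)/(-\log\delta)$ and hence $\inf_{\alpha_\cdot}\P(\tau^{\alpha_\cdot}_x\leq t)=0$ for every $t$. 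But note that one cannot then simply send $t\to\infty$: the near-optimal control may depend on $t$. The paper handles this with a second, separate argument that you are missing: it introduces an auxiliary discounted value function $w$ (cost $h\geq 0$ vanishing on $\Omega$ and positive outside), uses the dynamic programming principle to show $w\equiv 0$ on $\Omega$, and only then invokes the convexity/compactness assumption \eqref{compact} to obtain an \emph{optimal} control for $w$, which is the admissible control realizing $\P(\tau^{\alpha_\cdot}_x<+\infty)=0$. So the control whose existence the proposition asserts is produced as an optimizer of a well-posed control problem (via \cite{hl}), not by pasting together a feedback. If you want to keep your direct route, you would need either a genuine weak-existence result for \eqref{sys} with a measurable feedback (and a relaxation of $\mathcal{A}$ to weak/relaxed controls, then a measurable-selection argument back to $\mathcal{A}$), or to replace the single $t$-uniform control by a compactness argument such as the paper's.
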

\begin{proof}  
Let $V$ be a $C^2$ extension of the function $-\log(d(x))$  to the whole $\om$, which coincides with $-\log d(x)$ in a neighborhood of $\partial\Omega$. We can assume that $V\geq 1$ 
in $\om$ and moreover, by Proposition \ref{liap2},   there exists a constant $C\geq 0$ such that 
\[F[V](x)\geq -C\qquad x\in\Omega.\]  

Define for every $\delta>0$, 
and $x\in\Omega$, the exit time \[\tau^{\delta,\alpha_\cdot}_x= \inf\{t\geq 0 \ | \ 
X_t^{\alpha_\cdot}\not\in  \Omega\setminus\overline{\Omega_\delta}\}\in\R \cup \{+\infty\}. \]

By superoptimality principles for viscosity solutions (see \cite[Corollary 6]{c}), we get
for every $\delta>0$, every $x\in\Omega\setminus\Omega_\delta$ and every $t\geq 0$, recalling that $V\geq 1$,
\begin{equation}\label{sup} V(x)\geq \inf_{\alpha_\cdot\in\mathcal{A}} \E 
\left[V(X_{\tau^{\delta,\alpha_\cdot}_x\wedge 
t}^{\alpha_\cdot})-C(\tau^{\delta,\alpha_\cdot}_x\wedge t)\right]\geq 
\inf_{\alpha_\cdot\in\mathcal{A}}  \left[\int_{\{\omega| 
\tau^{\delta,\alpha_\cdot}_x(\omega)\leq t\}} V(X_{\tau^{\delta,\alpha_\cdot}}^{\alpha_\cdot}(\omega))
d \P(\omega)-C t \right],\end{equation}
where $\tau^{\delta,\alpha_\cdot}_x\wedge t=\min \{\tau^{\delta,\alpha_\cdot}_x,t\}.$ Take $\delta>0$ such that $V(x)=-\log d(x)$ for $x\in \overline{\Omega_\delta}\setminus\partial\Omega$. From \eqref{sup} we get that for every $t\geq 0$ 
\[\inf_{\alpha_\cdot\in\mathcal{A}} \P \left(\omega\ |\ 
\tau^{\delta,\alpha_\cdot}_x(\omega)\leq t\right)\leq \frac{V(x)+Ct}{-\log\delta}.\]
Moreover, since $\inf_{\alpha_\cdot} \P \left(\omega\ |\ 
\tau^{\delta,\alpha_\cdot}_x(\omega)\leq t\right)$ is decreasing as $\delta\to 0$,  
we get that, for all $x\in \Omega$,  \[\inf_\delta\inf_{\alpha_\cdot\in\mathcal{A}} \P 
\left(\omega\ |\ \tau^{\delta,\alpha_\cdot}_x(\omega)\leq t\right)=0.\]
So,    for every $t\geq 0$, 
\begin{equation}\label{cont} \inf_{\alpha_\cdot\in\mathcal{A}}  \P \left(\omega\ |\ 
\tau^{ \alpha_\cdot}_x(\omega)\leq t\right)=0. \end{equation} 
Finally we claim that \eqref{cont} implies that 
\begin{equation}\label{inven}\inf_{\alpha_\cdot\in\mathcal{A}} \P \left(\omega\ |\ 
\tau^{ \alpha_\cdot}_x(\omega)<+\infty\right)=0.\end{equation}  
Let  $h$ be a bounded uniformly continuous function such that $h\equiv 0$ in $\Omega$ and $h>0$ in $\R^n\setminus\Omega$.
Let \begin{equation}\label{w} w(x)= \inf_{\alpha_\cdot\in\mathcal{A}}  \E 
\int_0^{+\infty} h(X_t^{\alpha_\cdot})e^{-\nu t} dt.\end{equation} So $0\leq w\leq 
\frac{\|h\|_\infty}{\nu}$. Moreover, by standard dynamic programming principle 
(see \cite{hl}) for every $t>0$
\[w(x)=\inf_{\alpha_\cdot\in\mathcal{A}}  \E \left(w(X_t^{\alpha_\cdot})e^{-\nu t}+\int_0^t 
h(X_s^{\alpha_\cdot})e^{-\nu s} ds\right).\]
Fix $\eps>0$ and take $t$ such that $\frac{\|h\|_\infty}{\nu} e^{-\nu t}\leq \eps$.  So, for every $x\in \Omega$, due to \eqref{cont}, \[w(x) \leq
\inf_{\alpha_\cdot\in\mathcal{A}} \E  \int_0^t h(X_s^{\alpha_\cdot})e^{-\nu s} ds +\eps\leq 
\eps,\] from which we deduce by arbitrariness of $\eps$,  $w\equiv 0$. 
This implies  \eqref{inven}, recalling that under assumption \eqref{compact}, for every initial data  $ x\in\Omega$ there exists an optimal control
 for the control problem \eqref{w} (see \cite{hl}). \end{proof}

Consider a terminal cost $\phi\in 
C(\partial \Omega)$ that the controller pays as the system hits the boundary.
We introduce the cost functional 
\[
G(x,\alpha,\omega)= \begin{cases}  
\phi(X_{\tau^{\alpha_\cdot}_x}^{\alpha_\cdot}(\omega)) &  \tau_x^{\alpha_\cdot}(\omega)<+\infty\\ 0 &  \tau_x^{\alpha_\cdot}(\omega)=+\infty,\end{cases}\]
 and 
define the value function  
\[
v(x)=\inf_{\alpha_\cdot\in\mathcal{A}}  \E\ [G(x,\alpha_\cdot,\omega)].\]

We make a non degeneracy assumption on the system at a minimal point for
$\phi$ which is somehow opposite to condition \eqref{irrelevant}. 
\begin{equation}\label{relevant}
\begin{cases}
\exists \ \underline{x}\in\partial 
\Omega,  \ \underline \alpha\in A  \text{ such that 
}\phi(\underline x)=\min \phi \text{ and }\\ 
 \text{either } \sigma^T (\underline x,\underline\alpha) Dd(\underline x)\ne 0,
\text{or }b(\underline x,\underline\alpha)\cdot Dd(\underline x) +\tr(a(\underline 
x,\underline\alpha) D^2d(\underline x))<0 .
\end{cases}
\end{equation}

\begin{theorem}
Let \eqref{irrelevant}, \eqref{reg22s}, \eqref{compact} and \eqref{relevant} hold. 
Then the value function $v$ satisfies
\[
\forall  x\in\Omega,\qquad v(x) = \min\{\min \phi, 0\} .
\]
\end{theorem}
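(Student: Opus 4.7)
My approach combines the Liouville-type rigidity (Theorem \ref{liouville}) with the viability of $\Omega$ (Proposition \ref{viable}) and the boundary non-degeneracy at $\underline{x}$ imposed by \eqref{relevant}. Standard dynamic programming for exit-time problems (cf.~\cite{FS}) makes $v$ an upper semicontinuous viscosity subsolution of $F[v]=0$ in $\Omega$, and the obvious bound $|v|\leq\|\phi\|_{\infty}$ trivially implies the growth condition \eqref{sublog}. Assuming the customary interior non-degeneracy \eqref{sell-0}, which provides the Strong Maximum Principle for $F$, Theorem \ref{liouville} then forces $v\equiv c$ for some constant $c\in\R$, so the task reduces to identifying $c$.

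The lower bound $c\geq\min\{\min\phi,0\}$ is immediate pathwise: for every admissible $\alpha_\cdot$ and every $\omega$, either $\tau_x^{\alpha_\cdot}<+\infty$ and $G=\phi(X_{\tau_x^{\alpha_\cdot}}^{\alpha_\cdot})\geq\min\phi$, or $\tau_x^{\alpha_\cdot}=+\infty$ and $G=0$, so $G\geq\min\{\min\phi,0\}$ and hence $v\geq\min\{\min\phi,0\}$. For the upper bound I would prove the two inequalities $c\leq 0$ and, when $\min\phi\leq 0$, $c\leq\min\phi$. The first follows directly from Proposition \ref{viable}: for a fixed $x$, take an admissible $\alpha_\cdot^0$ with $\tau_x^{\alpha_\cdot^0}=+\infty$ almost surely, so that $G(x,\alpha_\cdot^0,\omega)\equiv 0$ and hence $v(x)\leq 0$.

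The second inequality is the delicate step. I would fix a sequence $x_n\in\Omega$ converging to $\underline{x}$ and use the constant control $\underline{\alpha}$ supplied by \eqref{relevant}; write $\tau_n:=\tau_{x_n}^{\underline{\alpha}}$ for the corresponding exit time. The crucial claim is that $\tau_n\to 0$ in probability, from which the Lipschitz estimate \eqref{reg22s} yields $X_{\tau_n}^{x_n,\underline{\alpha}}\to\underline{x}$ in probability; combining with the fact that $\{\tau_n=+\infty\}$ has vanishing probability and with bounded convergence, one obtains
\[
\E[G(x_n,\underline{\alpha},\omega)]\longrightarrow\phi(\underline{x})=\min\phi,
\]
and $v\equiv c$ forces $c\leq\min\phi$. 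The main technical obstacle is the claim $\tau_n\to 0$, and condition \eqref{relevant} offers two alternatives to handle separately. If $\sigma^T(\underline{x},\underline{\alpha})Dd(\underline{x})\neq 0$, the scalar process $d(X_t^{x_n,\underline{\alpha}})$ has a non-trivial Brownian component near $\underline{x}$, so classical hitting-time estimates for non-degenerate one-dimensional diffusions give exit in arbitrarily short time with high probability. Otherwise $b(\underline{x},\underline{\alpha})\cdot Dd(\underline{x})+\tr(a(\underline{x},\underline{\alpha})D^2 d(\underline{x}))<0$, and applying It\^o's formula to $d(X_t^{x_n,\underline{\alpha}})$ (with $d$ smoothly extended inside $\Omega$) shows that $d(X^{x_n,\underline{\alpha}}_\cdot)$ is a supermartingale with strictly negative drift in a neighborhood of $\underline{x}$; comparison with a deterministic linear-drift ODE then yields $\tau_n\to 0$.
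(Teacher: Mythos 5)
Your decomposition matches the paper's: use the Liouville result to show the value is constant, obtain the pathwise lower bound directly, and then split into the cases $\min\phi>0$ (handled by viability, Proposition~\ref{viable}) and $\min\phi\leq 0$. For the delicate step $c\leq\min\phi$ you depart genuinely from the paper: instead of building a $C^2$ upper barrier $W(x)=1-e^{-k(d(x)+\lambda|x-\underline x|^2)}$ at $\underline x$ and running a PDE comparison argument against the subsolution $v^*$ on $B(\underline x,\rho)\cap\overline\Omega$, you estimate the exit time $\tau_n$ under the constant control $\underline\alpha$ directly and show $\E[G(x_n,\underline\alpha)]\to\phi(\underline x)$. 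This is a legitimate alternative: both uses of \eqref{relevant} (nondegenerate normal diffusion, or strictly negative normal drift) admit either a barrier or a probabilistic treatment.

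However, there is a genuine gap in the way you close the argument. The dynamic programming theory for exit-time problems gives that the \emph{upper semicontinuous envelope} $v^*$ is a subsolution of $F[u]=0$ in $\Omega$ with $u\leq\phi$ on $\partial\Omega$ in the viscosity sense; $v$ itself need not be USC, and the paper works with $v^*$ throughout precisely for this reason. Theorem~\ref{liouville} then yields $v^*\equiv c$, not $v\equiv c$. With only $v\leq v^*=c$ on $\Omega$ available, your chain
\[
v(x_n)\leq \E[G(x_n,\underline\alpha)]\longrightarrow\phi(\underline x)=\min\phi
\]
gives $\min\phi=\lim_n v(x_n)\leq c$, which is the inequality we already have from the pathwise bound, not the one we need. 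Your estimate does carry the needed information, but it must be routed through $v^*$ at the boundary point: since $v^*$ is USC on $\overline\Omega$ and $v^*\equiv c$ in $\Omega$, one has $v^*(\underline x)\geq\limsup_{\Omega\ni y\to\underline x}v^*(y)=c$; and from $v\leq\E[G(\cdot,\underline\alpha)]$ in $\Omega$, $v=\phi$ on $\partial\Omega$, continuity of $\phi$, and your convergence $\E[G(y,\underline\alpha)]\to\phi(\underline x)$ as $\Omega\ni y\to\underline x$, one gets $v^*(\underline x)=\limsup_{\overline\Omega\ni y\to\underline x}v(y)\leq\phi(\underline x)=\min\phi$, whence $c\leq\min\phi$. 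Without this step your proposal does not establish the upper bound on $c$. (A minor further point: the claim $\tau_n\to 0$ in the drift case requires a localization argument to rule out escaping the neighborhood of $\underline x$ through the interior before hitting $\partial\Omega$; this can be done with the supermartingale estimate on $d(X_t)$ plus small-time a priori bounds on $|X_t-x_n|$, but it is worth writing out.)
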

\begin{proof}  The value function $v$ is known to satisfy a dynamic programming principle (see \cite{hl}). 
From this it is possible to deduce (see \cite[Thm 4.4]{bb}, \cite{br}) that the upper semicontinuous envelope  of $v$, defined as 
$$v^*(x):=\limsup_{
\overline\Omega\ni y\to x}\, v(y),\qquad x\in\ovl\om, $$ 
 is a
viscosity subsolution of the HJB equation $F[u] = 0$ in $\om$ and satisfies the 
Dirichlet boundary condition $u\leq\phi$ on $\de\om$ in viscosity sense (that 
we will recall later). 
Then, by Theorem \ref{liouville}, $v^*$ is constant in $\om$, say 
$v^*\equiv c$ in $\om$. 

By Proposition \ref{viable}, for every $x\in\Omega$ there exists an admissible control $\alpha$ such 
that $X_t^\alpha\in  \Omega $  almost surely for all $t\geq 0$. 
So, if  $\min\phi>0$, then it is immediate to deduce by the definition of $v$ that $v\equiv 0$ in $\Omega$. 

We assume now that $\min\phi\leq 0$, and we show that in this case $v\equiv \min \phi$ on $\Omega$. 
Being upper semicontinuous, $v^*$ satisfies
$v^*(x)\geq c$ for $x\in\de\om$. Since $v^*\geq v \geq \min \phi=:m$,  we 
must prove that $m\geq c$. 
Assumption \eqref{relevant} allows us to build an upper barrier 
 at the point $\underline x$, namely, 
a function $W\in C^2({B}(\underline x,r)
)$,
with $r>0$, such that: 

\noindent i) $W\geq 0$  and $LW>0$ 
 in
$B(\underline x,r)\cap\ovl\om$,

\noindent ii) $W(\underline x)=0$, $W(x)\geq\mu>0$ for all $x\in\ovl\om$ with 
$|x-\underline x|=r$.

\noindent 
For $k,\lambda>0$ set 
\[ 
W(x):=1-e^{-k(d(x)+\lambda|x-\underline x|^2)} .
\]
It is easy to compute 
 \begin{multline*}
\Big(-\tr(a(\underline x,\al)D^2W(\underline x))-b(\underline x,\al)\cdot 
DW(\underline x)\Big)
e^{k(d(x)+\lambda|x-\underline x|^2)}
=
\\
 -k\tr(a(\underline x,\al)D^2d(\underline x))+ k^2 
|\sigma(\underline x,\al)Dd(\underline x)|^2
-k b(\underline x,\al)\cdot Dd(\underline x)-
2k\lambda\tr a
(\underline x,\al).
\end{multline*}
Next we choose $\al=\underline\al$ and assume first the first condition in \eqref{relevant} to hold. 
 In this
case, since the coefficients are 
 continuous and $d$ is $C^2$ close to $\partial\om$, we can get 
$F[W]>0$ 
 in a
neighborhood of $\underline x$ by taking $k$ large enough. If, instead,
the second condition in 
\eqref{relevant}  holds, we choose 
 $\lambda$ small 
 to reach the same conclusion. 
Properties ii) of the barrier are obvious. 

Assume by contradiction that $m<c$ and fix $m'\in(m,c)$ and $r'>0$ 
 such that 
\[
\forall x\in B(\underline x, r')\cap \de\om,\qquad \phi(x)\leq m'<c.
\]
Now call $\rho:=\min(r,r')$ and define, for $k>0$,
\[
w(x):= k W(x) + m.
\]
Observe that $F[w] >0$ in 
$B(\underline x,\rho)\cap\ovl\om$ by the homogeneity of the operator $F$. 
Choose also 
$k$ large enough so that
\[
\forall x\in\ovl \om,\;\ |x-\underline x|=\rho,\qquad
w(x)\geq M.
\]
Next take $x_0\in \overline{B(\underline x,\rho)}\cap\ovl\om$ such that
\[
(v^*-w)(x_0)=\max_{\overline{B(\underline x,\rho)}\cap\ovl\om}(v^*-w) .
\]
There are three possible cases.

 1. If $x_0\in \de B(\underline x,\rho)\cap \ovl\om$ then $w(x_0)\geq M\geq 
v^*(x_0)$. 
It follows that $v^*(x)\leq w(x)$ for 
$x\in\overline{B(\underline x,\rho)}\cap\ovl\om$, which, taking $x$ in a 
neighborhood of 
$\underline x$, yields the contradiction 
$c \leq w(x)\leq m'$.

2. If $x_0\in  B(\underline x,\rho)\cap \om$ we use that $v^*$ is a subsolution 
of 
$F[u]=0$ to get $F[w](x_0)\leq 0$, a contradiction with $F[w] >0$.

3. Finally, if $x_0\in B(\underline x,\rho)\cap \de\om$, we use that 
$v^*$ is a viscosity subsolution of the boundary condition, namely, either 
$v^*(x_0)\leq \phi (x_0)$ or $F[w](x_0)\leq 0$. The latter case is impossible 
because $F[w] >0$, whereas the former contradicts $\phi(x)<c\leq v^*(x)$ in 
$B(\underline x,\rho)\cap \de\om$.

In all cases we reach a contradiction and complete the proof.
 \end{proof}
  \begin{remark}
  The conclusion of the last theorem still holds if the $C^2$ regularity of 
$\de\om$ holds only in a neighborhood of $\underline x$, provided  that  
 \eqref{irrelevant} is replaced by \eqref{d_subsol} and 
\eqref{sigma_perp}. In fact the Liouville property still holds by Proposition 
\ref{L-nonsmooth}.
\end{remark} 

\section{The ergodic HJB   equation in  invariant bounded domains}
 \label{sec:ergo}
In this Section we will 
assume  the stronger condition \eqref{invariance} on the behavior of the 
coefficients at the
boundary
 of the domain 
$\Omega$, 
as well as the strict ellipticity 
of the operator $L$ in the interior of $\Omega$ 
 \eqref{sell}.  
We will see in Section \ref{stoch_inter} that the assumption \eqref{invariance} is related to the invariance of both $\overline \Omega$ and $\Omega$ for the control system \eqref{sys}.

\subsection{Well-posedness of the PDE} 
This subsection is dedicated to the proof of Theorem \ref{cellthm}.
We set
\begin{equation}\label{hop}H(x,p,X):= \sup_{\alpha\in A} \left(-b(x,\alpha)\cdot 
p- 
\tr (a(x,\alpha) X)-l(x,\alpha)
\right),
\end{equation} 
where $l:\overline{\Omega}\times A\to \R$ 
 is a bounded function satisfying \eqref{reg3}.
In order to find a solution to \eqref{cell}, we consider the approximated 
problems
\begin{equation}\label{l}  
\lambda u_\lambda(x)+H(x,Du_\lambda(x), D^2u_\lambda(x)))=0, \qquad x\in\Omega, 
\end{equation} 
for $\lambda>0$. 
We start with showing that this equation admits a  unique viscosity solution, 
without prescribing any boundary conditions.

We recall the following well-known a priori estimates.  
\begin{lemma}[Krylov-Safonov  estimates] \label{ks} 
Let Assumptions  \eqref{reg2}, \eqref{sell}  and \eqref{reg3} 
hold and let $u\in C(\Omega)$ be a bounded 
viscosity solution to  \eqref{l} with $\lambda>0$.
Then $u\in C^2(\Omega)$, and for every compact set 
$K\subset\Omega$ there exists $\gamma\in (0,1)$, 
depending on the space dimension $n$, the ellipticity constants in $K$ and 
$\beta$ from \eqref{reg2} such that
\[\|  u\|_{C^{1, \gamma}(K)}\leq C_K,\] 
with $C_K$ depending on $\gamma$, $K$, 
$\Omega$, $\|\sigma\|_{\infty}$, $B$ from 
\eqref{reg2}, $\|b\|_\infty$, $\|l\|_\infty$, $\|u\|_\infty$ and any 
$\overline\lambda\geq\lambda$. 
 
\end{lemma}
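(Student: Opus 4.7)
Since the authors advertise this as a ``well-known'' a priori estimate, my strategy is to reduce matters to classical interior regularity theory for uniformly elliptic fully nonlinear equations, applied on compact subdomains of $\Omega$. Fix $K\subset\Omega$ compact and choose an intermediate compact set $K'$ with $K\subset K'\subset \Omega$ (both inclusions strict). By the strict ellipticity \eqref{sell} combined with the continuity of $a(\cdot,\alpha)$ in $x$ (uniform in $\alpha\in A$, guaranteed by \eqref{reg2}) and the compactness of $K'$, there exist constants $0<\theta_{K'}\leq \Theta_{K'}\leq \|\sigma\|_\infty^2$ such that
\[
\theta_{K'} I\leq a(x,\alpha)\leq \Theta_{K'} I \quad\text{for every } x\in K',\ \alpha\in A.
\]
Hence on $K'$ the operator $H$ is uniformly elliptic in the Pucci sense, even though global ellipticity degenerates as $x\to\partial\Omega$.

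Next I would rewrite equation \eqref{l} on $K'$ as
\[
H(x,Du,D^2u) = -\lambda u(x),
\]
treating the right-hand side as a bounded source with $\|\lambda u\|_\infty\leq \overline\lambda\,\|u\|_\infty$. The $x$-dependence of $H$ is H\"older: assumption \eqref{reg3} controls $b$ and $l$, while $a=\sigma\sigma^T$ inherits H\"older regularity of exponent $\beta$ from \eqref{reg2}. I would then apply the Krylov--Safonov/Caffarelli interior $C^{1,\gamma}$ estimate for viscosity solutions of fully nonlinear uniformly elliptic equations with H\"older coefficients (see, e.g., Caffarelli--Cabr\'e, \emph{Fully Nonlinear Elliptic Equations}, or Trudinger). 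This delivers $\|u\|_{C^{1,\gamma}(K)}\leq C_K$, with $\gamma\in(0,1)$ depending only on $n$, the ellipticity ratio $\theta_{K'}/\Theta_{K'}$ and on $\beta$, and with $C_K$ depending on the quantities listed in the statement.

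For the assertion $u\in C^2(\Omega)$ I would invoke the Evans--Krylov theorem. This applies because
\[
H(x,p,X) = \sup_{\alpha\in A}\bigl(-b(x,\alpha)\cdot p - \tr(a(x,\alpha)X) - l(x,\alpha)\bigr)
\]
is a supremum of affine functions in $(p,X)$, hence \emph{convex} in these variables. Convexity in $(p,X)$, uniform ellipticity on $K'$ and H\"older $x$-dependence give interior $C^{2,\alpha}$ estimates on every compact subset; letting $K$ exhaust $\Omega$ yields $u\in C^{2}(\Omega)$. The only subtle point is that the constraint $\beta>1/2$ from \eqref{reg2} is precisely what ensures $a=\sigma\sigma^T$ carries enough H\"older regularity in $x$ to enter the hypotheses of both Caffarelli's and Evans--Krylov's theorems; beyond this there is no substantive obstacle, and the explicit dependence of $C_K$ on $\|\sigma\|_\infty$, $B$, $\|b\|_\infty$, $\|l\|_\infty$, $\|u\|_\infty$ and $\overline\lambda$ is routine to track through the standard proofs.
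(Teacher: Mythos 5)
Your proposal is correct and follows essentially the same route the paper intends: the paper's proof is simply a citation to Trudinger \cite{tr3}, Safonov \cite{s}, and Gilbarg--Trudinger \cite[Sec.~17.4]{gt}, and what you have written out is precisely the standard localization-to-uniform-ellipticity argument plus interior $C^{1,\gamma}$ (Krylov--Safonov/Caffarelli) and $C^{2,\alpha}$ (Evans--Krylov, using concavity of $-H=\inf_\alpha(\cdots)$ in $(p,X)$) that those references deliver. One small inaccuracy in your closing remark: the threshold $\beta>1/2$ in \eqref{reg2} is \emph{not} what the interior regularity theory needs (any positive H\"older exponent on $a=\sigma\sigma^T$ suffices); it is required elsewhere in the paper, in the Lyapunov-function constructions of Section~\ref{sec:Lyapunov}, where one needs $\gamma<2\beta-1$ to be satisfiable.
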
 \begin{proof} 
For the proof we refer to \cite{tr3}, Theorem 2.1 and section on further 
regularity, pag. 950. See also \cite{s}, \cite[Sec. 17.4]{gt}. 
\end{proof} 
\begin{theorem} \label{ulambda} 
Under the assumptions \eqref{reg2}, \eqref{invariance}-\eqref{reg3}, for every 
$\lambda>0$, equation 
\eqref{l} admits a unique bounded solution $u_\lambda\in C^2(\Omega)$. 
Moreover for every $\tilde x\in\Omega$ and every $K\subset\subset\Omega$, the family $( u_\lambda-u_\lambda(\tilde x) )_{\lambda\in(0,1]}$ is bounded in 
$C^{1,\gamma}(K)$, for some $\gamma\in(0,1)$. 
Finally, for all $h>0$, there exists $\delta_h>0$ such that
\begin{equation}\label{u<log}
 \forall \lambda\in(0,1],\ x\in\om_{\delta_h},\qquad 
h\log(d(x))+\min_{\om\backslash\om_{\delta_h}}u_\lambda
\leq u_\lambda(x)\leq 
-h\log(d(x))+\max_{\om\backslash\om_{\delta_h}}u_\lambda.
\end{equation}
\end{theorem}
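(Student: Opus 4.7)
The strategy is threefold: (i) establish existence and uniqueness of the bounded solution $u_\lambda$ by Perron's method with a Lyapunov-based comparison, (ii) prove the log estimate \eqref{u<log} by a direct barrier argument, and (iii) derive the uniform $C^{1,\gamma}$ bound via a blow-up argument invoking Theorem~\ref{liouville}. For (i), observe that the constants $\pm\|l\|_\infty/\lambda$ are respectively a subsolution and a supersolution of \eqref{l}, so existence follows from Perron's method once comparison is in force. Given a bounded USC subsolution $u$ and a bounded LSC supersolution $v$, I would perturb $u$ to $u - \eps(-\log d)$: positive homogeneity of $F$ in $(p, X)$, combined with the strong Lyapunov property provided by Proposition~\ref{liap22} (which under \eqref{invariance} controls $\inf_\alpha[-b(x,\alpha)\cdot D(-\log d) - \tr(a(x,\alpha) D^2(-\log d))]$), makes this perturbation a strict subsolution of \eqref{l} in some $\Omega_\delta$ that diverges to $-\infty$ at $\partial\Omega$. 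Any maximum point of the associated doubling-variables functional therefore lies in a compact subset of $\Omega$, on which uniform ellipticity from \eqref{sell} allows the standard comparison argument of \cite{cil} to yield $u - \eps(-\log d) \leq v$; letting $\eps \to 0$ gives $u \leq v$. Perron's method then produces a bounded solution $u_\lambda$, which is $C^2(\Omega)$ by Lemma~\ref{ks}, and uniqueness follows by applying comparison twice.

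For (ii), fix $h > 0$ and set $K_\lambda := \max_{\Omega \setminus \Omega_{\delta_h}} u_\lambda$, with $\delta_h$ to be chosen. I claim that $w(x) := -h\log d(x) + K_\lambda$ is a supersolution of \eqref{l} in $\Omega_{\delta_h}$ for $\delta_h$ small enough, independently of $\lambda$. Indeed, positive homogeneity of $F$ and its invariance under additive constants yield $F[w] = h\, F[-\log d]$, so applying Proposition~\ref{liap2} with $M := 2\|l\|_\infty/h$ furnishes $\delta_h$ such that $F[w] \geq 2\|l\|_\infty$ in $\Omega_{\delta_h}$. Combining $H(x, Dw, D^2 w) \geq F[w] - \|l\|_\infty$ with the a priori bound $\lambda K_\lambda \geq -\|l\|_\infty$ (from $|u_\lambda| \leq \|l\|_\infty/\lambda$) and $-h\log d \geq 0$ for $\delta_h < 1$ gives $\lambda w + H(x, Dw, D^2 w) \geq 0$. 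Since $u_\lambda \leq K_\lambda \leq w$ on $\{d = \delta_h\}$ and $w \to +\infty$ at $\partial\Omega$, comparison yields the upper bound in \eqref{u<log}; the lower bound is symmetric, now invoking Proposition~\ref{liap22}.

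The main obstacle is (iii). Setting $v_\lambda := u_\lambda - u_\lambda(\tilde x)$, the function $v_\lambda$ solves an equation of the same form as \eqref{l} with $l$ replaced by $l + \lambda u_\lambda(\tilde x)$, whose $L^\infty$-norm is bounded by $2\|l\|_\infty$ uniformly in $\lambda \in (0, 1]$. Hence Lemma~\ref{ks} will yield the $C^{1,\gamma}(K)$ bound once one proves that $\|v_\lambda\|_{L^\infty(K')}$ is uniformly bounded in $\lambda$ on some $K \subset K' \subset\subset \Omega$. I would argue by contradiction: if $M_n := \|v_{\lambda_n}\|_{L^\infty(K')} \to \infty$ along a sequence, then $\lambda_n \to 0$ (otherwise $|v_\lambda| \leq 2\|l\|_\infty/\lambda_n$ keeps $M_n$ bounded), and the renormalizations $\tilde v_n := v_{\lambda_n}/M_n$ satisfy $\tilde v_n(\tilde x) = 0$, $\|\tilde v_n\|_{L^\infty(K')} = 1$, and by positive homogeneity a rescaled equation whose zeroth-order term and effective source both vanish in the limit while the principal part converges to $F$. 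Dividing \eqref{u<log} by $M_n$ shows that the $\tilde v_n$ are locally uniformly bounded in $\Omega$ and that any subsequential limit $v_0$ is bounded near $\partial\Omega$, so \eqref{sublog} holds trivially. Lemma~\ref{ks} then provides local $C^{1,\gamma}$ equicontinuity, so along a subsequence $\tilde v_n \to v_0$ locally uniformly with $F[v_0] = 0$ in $\Omega$ by stability of viscosity solutions. Theorem~\ref{liouville} forces $v_0$ to be constant, and $v_0(\tilde x) = 0$ gives $v_0 \equiv 0$, contradicting $\|v_0\|_{L^\infty(K')} = \lim \|\tilde v_n\|_{L^\infty(K')} = 1$.
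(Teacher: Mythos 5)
Your decomposition into (i) existence and uniqueness, (ii) the logarithmic barrier estimate, and (iii) blow-up plus a Liouville argument mirrors the paper's three-step proof closely, and parts (ii) and (iii) are essentially correct as written. In (ii) your barrier $-h\log d + K_\lambda$ and the choice $M = 2\|l\|_\infty/h$ in Proposition~\ref{liap2} reproduce the paper's estimate (the paper additionally works on a shell $\overline{\Omega_{\delta_h}}\setminus\Omega_\delta$ and lets $\delta\to 0$, but that is the same idea made pedantic). In (iii) you invoke Theorem~\ref{liouville} where the paper invokes the Strong Maximum/Minimum Principle directly on the limit $\psi$; these are interchangeable here, since under \eqref{invariance} and \eqref{sell} the hypotheses of Theorem~\ref{liouville} hold and a bounded solution of $F[\psi]=0$ attaining its sup or inf in $\Omega$ must be constant.

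The genuine gap is in (i). You construct $u_\lambda$ by Perron's method, relying on a comparison principle between a bounded USC subsolution and a bounded LSC supersolution which you resolve by localizing to a compact subset and then invoking ``the standard comparison argument of \cite{cil}.'' But that standard doubling-of-variables machinery requires Lipschitz (not merely H\"older) dependence of $\sigma$ and $b$ on $x$: the key estimate $\tr a(x,\al)X - \tr a(y,\al)Y \leq C|\sigma(x,\al)-\sigma(y,\al)|^2/\eta$ produces $|x-y|^{2\beta}/\eta$, which does \emph{not} vanish as $\eta\to 0$ when $\beta<1$. The present hypotheses only give H\"older $\sigma$ with exponent $\beta>1/2$ and H\"older $b$, so the comparison you need inside Perron's method (between the USC and LSC envelopes of the Perron supremum, which are not $C^2$ a priori) is not available. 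The paper explicitly flags exactly this obstruction and circumvents it by a different route: it approximates the coefficients by smooth ones and exhausts $\Omega$ by smooth subdomains, solves the Neumann problem on each approximation via \cite{barles} (which does require Lipschitz data and a $W^{3,\infty}$ domain), and then passes to the limit using the interior $C^{1,\gamma}$ bounds of Lemma~\ref{ks} plus stability. For \emph{uniqueness} the paper avoids doubling of variables altogether: it first upgrades both bounded solutions to $C^2(\Omega)$ by Lemma~\ref{ks}, then perturbs one of them by $\eps^2 V + \eps$ (with $V$ the Lyapunov function) so the difference attains an interior minimum where classical derivatives can be compared. You could adopt this classical-derivative comparison to fix your uniqueness step, but your existence step as written still presupposes a comparison principle that is not justified under \eqref{reg2}--\eqref{reg3}.
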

\begin{proof}  The proof is divided into three parts.

Step 1.{ \em  Existence.}\\
The idea is to apply Perron's method to the Neumann problem for \eqref{l}, 
namely, under the boundary condition $\partial_\nu u=0$. Notice that 
the functions $\pm\|l\|_{\infty}/\lambda$ are sub and supersolutions for such
problem. However, a technical 
difficulty in the comparison principle comes from the
lack of Lipschitz-continuity of the terms $a$, $b$. 
Also, the application of Perron's method to this problem is achieved in 
\cite{barles} under the additional assumption that $\om$ is of class 
$W^{3,\infty}$. 
To overcome these difficulties one can proceed as follows.
Consider a sequence of smooth approximations 
of the $a(\cdot,\alpha)$, $b(\cdot,\alpha)$ and a sequence of smooth domains 
invading $\om$. The associated Neumann problems admit solution between 
$-\|l\|_{\infty}/\lambda$ and $\|l\|_{\infty}/\lambda$ by \cite{barles}.
Finally, using the estimates 
provided by Lemma \ref{ks} and the stability of 
viscosity solutions, one can pass to the limit along a subsequence of such 
solutions and obtain a solution $u_\lambda$ to \eqref{l}. Notice that the limit 
is only local in $\om$ 
and therefore we lose the information about the boundary behavior of 
$u_\lambda$. We only know that $u_\lambda$ is in $C^2(\om)$ and satisfies
\begin{equation}\label{b2}
\forall x\in\om,\qquad
-\frac{\|l\|_{\infty}}{\lambda}\leq u_\lambda(x)\leq 
\frac{\|l\|_{\infty}}{\lambda}.
\end{equation}

Step 2.{ \em  Uniqueness.}\\
Consider two bounded viscosity solutions $u,v\in C(\Omega)$ to \eqref{l}. Then 
they are both in 
$C^2(\Omega)$ by Lemma \ref{ks}. 
We first modify $v$ in order to obtain a supersolution blowing up 
at the boundary.
To this end, we will make use of the Lyapunov function $-\log(d(x))$ for the 
operator~$F$ defined by \eqref{F}.
Take $\delta>0$ from Proposition \ref{liap22}, associated with $M=0$, and 
consider the 
function $-\log(d(x))$ defined for $x\in\om_\delta$. Let 
$V$ be a $C^2$ extension of this function to the whole $\om$. Up to 
replacing $\delta$ with $\min(\delta,1)$, we have that $V\geq0$ 
in $\om_\delta$.
Then, for $\eps>0$, define
$$v_\eps:=v+\eps^2 V+\eps.$$
Using the fact that $v$ satisfies \eqref{l}, we find that, for $x\in\om$,
\[\begin{split}
\lambda v_\eps (x)+H(x,D v_\eps (x),D^2 v_\eps (x)) &\geq \lambda\eps^2 
V(x)+\lambda\eps\\
&\quad+\eps^2\inf_{\alpha\in A} \left(b(x,\alpha)\cdot 
DV(x)+\tr (a(x,\alpha) D^2V(x))\right)
\\
&=\eps^2(\lambda V(x)-F[-V](x))+\eps\lambda.
\end{split}\]
This last expression is positive if $x\in\om_\delta$, 
because $F[-V]<0$ there.
Otherwise, if $x\in\om\backslash\om_\delta$, it is larger than
$\eps(\lambda-C\eps)$, where $C$ is a constant only depending on $n,\lambda$ 
and the $L^\infty$ norms of $a,b,V,DV,D^2V$ in $\om\setminus\om_\delta$. 
We deduce that $v_\eps$ is a supersolution to \eqref{l} provided $\eps$ is 
smaller than some $\eps_0$.
Now, since $(v_\eps-u)(x)\to+\infty$ as $x\to\partial\om$, the function 
$v_\eps-u$ attains its minimum on $\om$ at some point $y$. By regularity we 
have  that $Dv_\eps(y)=Du(y)$ and $D^2 v_\eps(y)\geq D^2 u(y)$. 
Therefore, if $\eps\in(0,\eps_0)$, we obtain 
\[\begin{split}
0 &\leq \lambda v_\eps (y)+\sup_{\alpha\in A} \left(-b(y,\alpha)\cdot 
Dv_\eps(y)- \tr (a(y,\alpha) D^2v_\eps(y))-l(y,\alpha)\right)\\ 
&\leq \lambda(v_\eps-u)(y)+ \lambda u(y)+\sup_{\alpha\in A} 
\left(-b(y,\alpha)\cdot Du(y)- \tr 
(a(y,\alpha) D^2u(y))-l(y,\alpha)\right)\\
&=\lambda(v_\eps-u)(y).
\end{split}\]
It follows that $v_\eps\geq u$ in $\om$ and thus $v\geq u$ in $\om$ by the 
arbitrariness of $\eps\in(0,\eps_0)$.
Reversing the roles of $u,v$ we eventually derive $u\equiv v$.

Step 3.{ \em  A priori bounds.}\\ 
We start with deriving \eqref{u<log}. Fix $h>0$ and let $\delta_h$ be the 
$\delta$ given by Proposition \ref{liap2} with $M=2\|l\|_\infty/h$. Set 
$$V(x):=-h\log(d(x))+\max_{\om\backslash\om_{\delta_h}}u_\lambda.$$
It is not restrictive to assume that ${\delta_h}\leq1$, so that 
$V(x)\geq u_\lambda(x)$ if $d(x)={\delta_h}$. 
On the other hand, there exists $\delta'\in(0,\delta_h)$ small 
enough, depending on $\lambda$, such that $V(x)>\|l\|_{\infty}/\lambda\geq 
u_\lambda(x)$
if $d(x)<\delta'$.
Finally, since $u_\lambda\geq-\|l\|_{\infty}/\lambda$ and 
$F[V]>hM=2\|l\|_\infty$, 
for $x\in\om_{\delta_h}$ we get
$$\lambda V(x)+\sup_{\alpha\in A} \left(-b(x,\alpha)\cdot DV(x)- 
\tr (a(x,\alpha) D^2V(x))-l(x,\alpha)
\right)\geq\lambda\max_{\om\backslash\om_{\delta_h}}u_\lambda+F[V]-\|l\|_\infty
>0.$$
It follows from the fact that $u_\lambda$ is a (sub) solution to \eqref{l} 
that
$$\forall\delta\in(0,\delta'),\qquad
\max_{\overline{\om_{\delta_h}}\backslash\om_\delta}(u_\lambda-V)=
\max_{\partial(\overline{\om_{\delta_h}}\backslash\om_\delta)}(u_\lambda-V)\le0.
$$
Namely, $u_\lambda\leq V$ in $\om_{\delta_h}$, which is the second inequality 
in~\eqref{u<log}. The first inequality is 
obtained in analogous way, by using Proposition \ref{liap22} in place 
of \ref{liap2} and considering the subsolution
$V(x):=\min_{\om\backslash\om_{\delta_h}}u_\lambda+h\log(d(x))$.

We now fix $\tilde{x}\in\Omega$ 
and  we claim that the functions $(v_\lambda)_{\lambda\in(0,1]}$ defined by
$$v_\lambda(x):=  u_\lambda(x)- u_\lambda(\tilde{x})$$
are equibounded in any $K\subset\subset\om$. 
Assume by way of contradiction 
that there exists $K\subset\subset\om$ such that 
$$\eps_\lambda^{-1}:=\|v_\lambda\|_{L^\infty(K)}\to +\infty\qquad\text{as }
\lambda\to0^+.$$ 
Up to enlarging $K$ if needed, we can suppose that $\tilde x\in K$ and that, 
for $\delta_1$ from \eqref{u<log}, $\om\backslash\om_{\delta_1}\subset K$. The 
function  $\psi_\lambda(x):=\eps_\lambda v_\lambda(x)$ 
satisfies $\|\psi_\lambda\|_{L^\infty(K)}=1$, 
$\psi_\lambda(\tilde{x})=0$ and
\[
 \lambda \psi_\lambda+\lambda \eps_\lambda u_\lambda(\tilde{x})+\sup_{\alpha\in A} \left(-b(x,\alpha)\cdot D\psi_\lambda(x)- \tr (a(x,\alpha) D^2\psi_\lambda(x))-\eps_\lambda l(x,\alpha)
\right)=0, \qquad x\in\Omega.\]
Note that $|\lambda\eps_\lambda u_\lambda(\tilde{x})|\leq 
\eps_\lambda\|l\|_\infty\to 0$ as $\lambda\to 0$.  
Furthermore, by \eqref{u<log}, for $x\in\om\backslash K$,
\[\psi_\lambda(x)=\frac{u_\lambda(x)- u_\lambda(\tilde{x})}{\|u_\lambda- 
u_\lambda(\tilde{x})\|_{L^\infty(K)}}\leq
\frac{\max_{\om\backslash\om_{\delta_1}}u_\lambda
-\log(d(x))- u_\lambda(\tilde{x})}{\|u_\lambda- 
u_\lambda(\tilde{x})\|_{L^\infty(K)}}\leq1-\eps_\lambda\log(d(x)),\]
and
\[\psi_\lambda(x)\geq
\frac{\min_{\om\backslash\om_{\delta_1}}u_\lambda
+\log(d(x))- u_\lambda(\tilde{x})}{\|u_\lambda- 
u_\lambda(\tilde{x})\|_{L^\infty(K)}}\geq-1+\eps_\lambda\log(d(x)).\]
A first consequence of these estimates is that, in any compact subset of $\om$, 
the $\psi_\lambda$ are equibounded, whence equibounded in $C^{1,\gamma}$ 
by Lemma \ref{ks}. Using a diagonal procedure, we can then find a sequence
$\lambda\to0$ for which the $\psi_\lambda$ converge locally uniformly in $\om$
to some function $\psi\in C(\Omega)$.  By stability property of viscosity 
solutions, $\psi$  solves 
\[ \sup_{\alpha\in A} \left(-b(x,\alpha)\cdot D\psi (x)- \tr (a(x,\alpha) D^2\psi (x)) 
\right)=0, \qquad x\in\Omega.\] 
We further know that 
$\|\psi\|_{L^\infty(K)}=1$, and, for $x\in\om\backslash K$, $|\psi(x)|\leq1$
because $|\psi_\lambda(x)|\leq1-\eps_\lambda\log(d(x))$.
This means that $\psi$ attains either the global maximum $1$ or minimum $-1$ 
in $K$, and therefore it is constantly equal to $1$ or $-1$ by the Strong 
Maximum or Minimum Principle, which holds by \eqref{sell}.
%
This is impossible because $\psi(\tilde{x})=0$.

We have shown that the $v_\lambda$ are equibounded in any $K\subset\subset\om$. 
Since they satisfy \eqref{l} with $l$ replaced by $l+\lambda u_\lambda 
(\tilde{x})$, and $|\lambda u_\lambda(\tilde{x})|\leq\|l\|_\infty$, 
Lemma \ref{ks} eventually yields that they are equibounded in 
$C^{1,\gamma}(K)$.

\end{proof} 

We are now in the position to prove our main result.

\begin{proof}[Proof of Theorem \ref{cellthm}]
First of all we prove that there exists $c\in \R$ such that  \eqref{cell} admits a solution. 
Consider the solutions $(u_\lambda)_{\lambda\in(0,1]}$ to \eqref{l}. Fix 
$\tilde{x}\in\Omega$. By \eqref{b2},  $\lambda u_\lambda(\tilde{x})$ 
converges (up to subsequences) to some value $-c$ as $\lambda\to0$. 
Define $v_\lambda=   u_\lambda - u_\lambda (\tilde{x})$.   Theorem 
\ref{ulambda} gives that 
the $v_\lambda$ are equibounded in $C^{1,\gamma}(K)$, for 
any $K\subset\subset\Omega$. Thus, using a diagonalization 
procedure, we can extract a subsequence of $v_\lambda$ converging locally 
uniformly to $\chi\in C(\Omega)$, which, by stability,  is a viscosity 
solution to \eqref{cell}. We know from Lemma~\ref{ks} that $\chi\in 
C^2(\Omega)$. Moreover, $\chi$ satisfies the same bounds \eqref{u<log} as the 
$u_\lambda$ (and the $v_\lambda$). Using the fact that such bounds hold true 
for arbitrary $h>0$, we eventually find that $\chi$ fulfils \eqref{chi}.
Uniqueness of $c$ and $\chi$ (up to constants) follows by the following stronger uniqueness result.
\end{proof} 
\begin{proposition}\label{uniprop} Let $\chi_1$ and $\chi_2$ be viscosity solutions of \eqref{cell} with, respectively, $c=c_1$ and $c=c_2$. Assume moreover there exist $\kappa>0$ such that 
\[
\lim_{x\to\partial\Omega} \chi_1(x)d(x)^{\kappa }=0=\lim_{x\to\partial\Omega} \chi_2(x)d(x)^{\kappa}. 
\]
 Then 
  $c_1=c_2$
  and there exists a constant $k\in\R$ such that  $\chi_1\equiv \chi_2 +k$. 
 \end{proposition}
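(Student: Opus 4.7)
My plan is to reduce the uniqueness statement to the Liouville-type Theorem~\ref{liouville2}, applied to the difference $w := \chi_1 - \chi_2$. The core of the argument will be the pointwise inequality
\[
F[w](x) \geq c_1 - c_2, \qquad x \in \Omega,
\]
which turns $w$ into a supersolution of the $l$-free operator $F$ with a favourable right-hand side (after WLOG choosing $c_1 \geq c_2$). The decay $\chi_i(x)\,d(x)^\kappa \to 0$ then makes the extended Liouville theorem from Remark~\ref{remcrescita} applicable, forcing $w$ to be constant. But a constant $w$ satisfies $F[w] \equiv 0$, and combined with the pointwise inequality this will give $c_1 \leq c_2$, hence $c_1 = c_2$ and $\chi_1 - \chi_2 \equiv k \in \R$.

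To establish the pointwise inequality I will first upgrade $\chi_1, \chi_2$ to classical $C^2(\Omega)$ solutions: this follows from the interior Krylov--Safonov and Evans--Krylov theory underlying Lemma~\ref{ks}, since the strict ellipticity \eqref{sell} is in force and the absence of the discount term does not affect the interior estimates on compact subsets of $\Omega$. Then, writing $L_\alpha u := -b(x,\alpha) \cdot Du - \tr(a(x,\alpha) D^2 u)$ so that $F[u] = \sup_\alpha L_\alpha u$ and $H[u] = \sup_\alpha[L_\alpha u - l(x,\alpha)]$, the identity $L_\alpha \chi_1 = L_\alpha \chi_2 + L_\alpha w$ combined with the trivial bound $L_\alpha w \leq F[w]$ yields, for each $\alpha \in A$,
\[
L_\alpha \chi_1 - l(x,\alpha) \leq L_\alpha \chi_2 - l(x,\alpha) + F[w](x);
\]
taking the supremum over $\alpha$ will give $c_1 \leq c_2 + F[w]$, which is exactly the desired pointwise inequality.

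For the application of the Liouville theorem, the decay hypothesis provides $|w(x)|\, d(x)^\kappa \to 0$, so in particular $\liminf_{x\to\partial\Omega} w(x) d(x)^\kappa \geq 0$. All other hypotheses of Theorem~\ref{liouville2}---the regularity assumptions \eqref{reg22}, \eqref{reg2}, the boundary behaviour \eqref{invariance}, and the Strong Minimum Principle provided by \eqref{sell}---are already in force in the setting of Theorem~\ref{cellthm}. Invoking Remark~\ref{remcrescita} to replace the $-\log d$ growth by the $d^\kappa$ growth, I will conclude that $w$ is constant, which together with the pointwise inequality closes the proof.

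The step I expect to require the most care is the justification of the pointwise inequality: if one insisted on working purely in viscosity sense and avoided appealing to interior $C^2$ regularity, one would have to mimic the above computation through a doubling-variables argument, using a Lyapunov function such as $-\log d$ (or $d^{-\kappa'}$ with $\kappa' > \kappa$) as a penalty to localise in $\Omega$ and Ishii's lemma to compare the test matrices. Since $C^2$ regularity is available here and is already exploited elsewhere in the paper (cf.\ Theorem~\ref{cellthm}), I expect the direct classical route to be the cleanest.
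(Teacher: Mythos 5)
Your proposal is correct and follows essentially the same route as the paper: upgrade $\chi_1,\chi_2$ to $C^2(\Omega)$ via Lemma~\ref{ks}, derive the pointwise inequality $F[\chi_1-\chi_2]\geq c_1-c_2\geq 0$ (the paper's inequality \eqref{meno}), and invoke Theorem~\ref{liouville2} together with Remark~\ref{remcrescita} to conclude that $\chi_1-\chi_2$ is constant and hence $c_1=c_2$. Your explicit derivation of the pointwise inequality via $L_\alpha w\leq F[w]$ is a correct spelling-out of what the paper leaves implicit.
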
 
\begin{proof} 
First of all, observe that by Lemma \ref{ks}, $\chi_1, \chi_2\in C^2(\Omega)$. Without loss of generality we can assume $c_1\geq c_2$. So,
\begin{equation}\label{meno}
\sup_{\alpha\in A} \left(-b(x,\alpha)\cdot D(\chi_1-\chi_2)- \tr (a(x,\alpha) 
D^2(\chi_1-\chi_2)) 
\right) \geq c_1-c_2\geq 0. 	\end{equation} By Theorem \ref{liouville2} and Remark \ref{remcrescita}, $\chi_1-\chi_2$ is  a constant, and therefore $c_1=c_2$. 
\end{proof} 
\begin{remark} \label{regularityrem} 
If we weaken the H\"older regularity  \eqref{reg3} on the coefficients $b,l$ of equation \eqref{l} to the  uniform continuity as stated in \eqref{reg22}, Krylov Safonov estimates stated in Lemma \ref{ks} still hold, but we just expect that any bounded continuous viscosity solution to \eqref{l} is $C^{1,\alpha}(\om)$ for every $\alpha\in (0,1)$, not  in $C^2(\om)$. 

This implies that in Theorem \ref{ulambda} and in Theorem \ref{cellthm}  the solutions $u_\lambda$ to \eqref{l} and $\chi$  to
\eqref{cell} are of class $C^{1,\alpha}(\om)$ for every $\alpha\in (0,1)$. So, in the  step Uniqueness of the proof of Theorem \ref{ulambda} the argument has to be modified by using the by now standard  doubling variables argument in the theory of viscosity solutions (see e.g. \cite{cil}). Moreover, also in the proof of Theorem \ref{cellthm}, the argument to prove that $\chi_1-\chi_2$ solves \eqref{meno} has to be modified appropriately.
 
\end{remark}

\subsection{A stochastic control 
 interpretation}\label{stoch_inter}

We show an application of the previous results to an ergodic control problem in 
$\Omega$ with state constraints. 
Throughout this subsection, we will assume 
the stronger regularity assumptions on the coefficients \eqref{reg22s}
to hold. It is known from \cite{bg, bj} that, under the 
assumption~\eqref{invariance} on the behavior of the coefficients near
$\partial\Omega$, the set $\overline\Omega$ is {\em invariant} for the control 
system~\eqref{sys} in the following sense: for every $x\in \overline\Omega$ and 
any admissible control $\alpha_\cdot\in{\mathcal A}$, the trajectory of \eqref{sys} 
satisfies $X_t^{\alpha_\cdot}\in \overline{ \Omega}$  almost surely for all $t\geq 0$.

Therefore no restrictions on the controls are needed to keep the system forever 
in $\overline\Omega$, and we can define the value function of the infinite 
horizon discounted control problem with {\em state constraint $\overline\Omega$} 
\begin{equation}
\label{vl}
v_\lambda (x):=  \inf_{\alpha_\cdot\in\mathcal{A}} \E 
\left[\int_0^\infty e^{-\lambda t} l(X_t^{\alpha_\cdot},\alpha_t) dt \right] , \quad 
x\in\overline\Omega
\end{equation} where $l:\overline{\Omega}\times A\to \R$ 
 is a bounded function satisfying \eqref{reg22} and $\lambda>0$.
The next result states that $v_\lambda$ is the solution of the PDE \eqref{l}.
\begin{proposition} 
Let \eqref{invariance} and \eqref{reg22s} 
hold. Then $v_\lambda$ is continuous in $\overline\Omega$ and it is a 
viscosity solution of \eqref{l}. 

If, in addition, \eqref{sell} holds, then $v_\lambda (x)=u_\lambda (x)$ for all 
$x\in\Omega$, where $u_\lambda$ is the smooth solution of \eqref{l} given by 
Theorem \ref{ulambda}.
  \end{proposition}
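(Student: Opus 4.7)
The plan is to verify the three assertions in turn: (i) continuity of $v_\lambda$ on $\ovl\om$, (ii) the viscosity solution property, and (iii) identification with $u_\lambda$ under \eqref{sell}. Assumption \eqref{invariance} plays the critical role of ensuring that $\overline\om$ is invariant for \eqref{sys} (by \cite{bg,bj}), so that every admissible control is automatically feasible for the state-constrained problem and the functional in \eqref{vl} is well defined for all $x\in\ovl\om$.

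\textbf{Continuity.} Under the Lipschitz regularity \eqref{reg22s}, for any $\alpha_\cdot\in\mathcal{A}$ and $x,y\in\ovl\om$, driving the two solutions of \eqref{sys} by the same Brownian motion and applying the Burkholder--Davis--Gundy inequality and Grönwall's lemma yields the classical estimate
\[
\forall t\ge0,\qquad \E\bigl[|X_t^{x,\alpha_\cdot}-X_t^{y,\alpha_\cdot}|^2\bigr]\le e^{Ct}|x-y|^2,
\]
with $C$ depending only on $B$ in \eqref{reg22s}, and uniform in $\alpha_\cdot$. Since $l$ is bounded and uniformly continuous on $\ovl\om\times A$ (by compactness of $A$ and continuity of $l$, strengthened to Hölder in \eqref{reg3} when needed), splitting the integral defining $v_\lambda(x)-v_\lambda(y)$ at a suitably chosen truncation time $T$ and using $e^{-\lambda T}\|l\|_\infty/\lambda\to 0$ for the tail gives a modulus of continuity for $v_\lambda$ on $\ovl\om$, independent of the control.

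\textbf{Viscosity solution property.} Because $\ovl\om$ is invariant, the set of admissible controls used in \eqref{vl} coincides with the whole $\mathcal{A}$, and the standard dynamic programming principle for infinite-horizon discounted stochastic control applies without any state-constraint modification: for every stopping time $\tau$ and every $x\in\ovl\om$,
\[
v_\lambda(x)=\inf_{\alpha_\cdot\in\mathcal{A}}\E\!\left[\int_0^\tau e^{-\lambda s}l(X_s^{\alpha_\cdot},\alpha_s)\,ds+e^{-\lambda\tau}v_\lambda(X_\tau^{\alpha_\cdot})\right].
\]
For $x_0\in\om$ and a smooth test $\phi$ touching $v_\lambda$ from above (resp.\ below) at $x_0$, localize $\tau$ as the exit from a small ball around $x_0$ followed by a deterministic time $h$; applying Itô's formula to $e^{-\lambda s}\phi(X_s^{\alpha_\cdot})$, dividing by $h$ and letting $h\to0^+$ produces, by the classical argument (see e.g.\ \cite{FS}), the subsolution (resp.\ supersolution) inequality for \eqref{l} at $x_0$. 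Hence $v_\lambda$ is a continuous bounded viscosity solution of \eqref{l} in $\om$.

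\textbf{Identification with $u_\lambda$.} Assume now also \eqref{sell}. Both $v_\lambda$ and $u_\lambda$ are bounded continuous viscosity solutions of \eqref{l} in $\om$: $v_\lambda$ by the previous steps (with the obvious bound $|v_\lambda|\le \|l\|_\infty/\lambda$) and $u_\lambda$ by Theorem~\ref{ulambda}. The uniqueness assertion in Theorem~\ref{ulambda}, whose proof relied on building a supersolution blowing up at $\partial\om$ from the Lyapunov function provided by Proposition~\ref{liap22} under \eqref{invariance}, applies to both; therefore $v_\lambda\equiv u_\lambda$ in $\om$.

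\textbf{Main obstacle.} The only genuinely delicate point is the continuity of $v_\lambda$ \emph{up to} $\partial\om$: interior continuity is routine from Lipschitz SDE estimates, but at boundary points one must exploit that, by invariance of $\ovl\om$, the trajectories starting from $x\in\partial\om$ and from a nearby $y\in\om$ both remain in $\ovl\om$ and stay close in $L^2$ on any finite horizon, which suffices only because the discount factor kills the tail contribution. Once continuity on $\ovl\om$ is established, the DPP and Itô argument are standard and the identification with $u_\lambda$ reduces to invoking the uniqueness already proved.
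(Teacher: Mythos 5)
Your proposal matches the paper's proof in all essentials: continuity of $v_\lambda$ on $\overline\Omega$ via Lipschitz SDE estimates with the same control and Brownian motion, truncation of the time integral at a $T$ chosen so the discounted tail is small, then the standard DPP and Itô argument for the viscosity property, and finally identification with $u_\lambda$ via the uniqueness part of Theorem~\ref{ulambda}. The paper uses the $L^1$ estimate $\E[|X_t-Y_t|]\le e^{Bt}|x-y|$ directly and phrases the comparison through an $\epsilon$-optimal control for $y$, whereas you state the $L^2$ version and are slightly informal about the $\epsilon$-optimal step; both are routine reformulations of the same argument. One small imprecision worth noting: the reason \eqref{sell} is required for the identification is that the uniqueness step of Theorem~\ref{ulambda} first invokes the Krylov--Safonov regularity of Lemma~\ref{ks} (which needs \eqref{sell}) to get $C^2$ solutions before applying the Lyapunov-function comparison; you attribute it only to the Lyapunov construction of Proposition~\ref{liap22}, which by itself needs \eqref{invariance} but not \eqref{sell}.
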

  \begin{proof}
  Fix $\epsilon >0$ and $T$ large enough so that $\int_T^\infty e^{-\lambda t} l(X_t^{\alpha_\cdot},\alpha_t) dt < \epsilon$ for all $x$ and $\alpha_\cdot$. Pick $x, y\in\overline\Omega$ and a control $\alpha_\cdot\in {\mathcal A}$ $\epsilon-$optimal for the initial point $y$.
  Then, denoting with $Y_t^{\alpha_\cdot}$ the trajectory starting form $y$ and using such control, 
  \[
  v_\lambda (x) - v_\lambda (y)\leq \E\left[\int_0^T e^{-\lambda t} |l(X_t^{\alpha_\cdot},\alpha_t) - l(Y_t^{\alpha_\cdot},\alpha_t) | dt\right] +3\epsilon .
  \]
 Now we use the standard estimate  $\E\left[|X_t^{\alpha_\cdot} - Y_t^{\alpha_\cdot}|\right]\leq e^{Bt}|x-y|$ and the assumption $|l(X,\alpha) - l(Y,\alpha)|\leq \omega |X-Y|$,  where the modulus $\omega$ can be assumed to be concave w.l.o.g., to get 
  \[
  v_\lambda (x) - v_\lambda (y)\leq  \int_0^T e^{-\lambda t} \omega (e^{Bt}|x-y|) dt +3\epsilon
  \]
 and the right hand side can be made smaller than $4\epsilon$ by choosing $|x-y|$ small enough. Then the continuity of $v_\lambda$ is obtained by repeating the argument with the roles of $x$ and $y$ reversed. Once this is established the Dynamic Programming Principle is a standard result, as well as deducing from it that $v_\lambda$ solves the equation in $\Omega$ in viscosity sense, see, e.g., \cite{FS}. 
 
The last statement follows from   Theorem \ref{ulambda}.
    \end{proof}

Next we show that 
 under the conditions \eqref{reg22s} and \eqref{invariance} of this section also the open set $\Omega$ is  
  invariant for the control system \eqref{sys}, 
  in analogy with Proposition \ref{viable}. 

\begin{proposition}  \label{prop:invariance}
Assume \eqref{invariance} and \eqref{reg22s}. 
 Then, for every $x\in\Omega$ and every   admissible control $\alpha_\cdot$, 
 $X_t^{\alpha_\cdot}\in \Omega$  almost surely for all $t\geq 0$, i.e. $\tau^{\alpha_\cdot}_x=+\infty$ almost surely, where $\tau^{\alpha_\cdot}_x$ is defined in \eqref{ex}. 
  \end{proposition}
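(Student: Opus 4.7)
The plan is to use $V(x):=-\log d(x)$ as a Lyapunov function and run an It\^o/hitting-time estimate uniformly over all admissible controls. The key difference from Proposition \ref{viable} is that assumption \eqref{invariance} is uniform in $\alpha\in A$, so the Lyapunov inequality holds pointwise in $\alpha$, not just in the viscosity sense for $F$.

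Fix $\delta>0$ from Proposition \ref{liap22} with $M=0$ and extend $V$ to a $C^2$ function on all of $\Omega$, still denoted $V$, with $V\geq 1$ on $\Omega$ and $V\equiv -\log d$ on $\Omega_\delta$. Inspecting the proof of Proposition \ref{liap22}, assumption \eqref{invariance} yields, pointwise in $\alpha$,
\[
b(x,\alpha)\cdot D(\log d)(x)+\tr\bigl(a(x,\alpha)D^2(\log d)(x)\bigr)\;\geq\; kd^{\gamma-1}(x)-B^2 d^{2\beta-2}(x),\qquad x\in\Omega_\delta,
\]
for every $\alpha\in A$ (not merely after taking the infimum), precisely because \eqref{invariance} is itself uniform in $\alpha$. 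Since $\gamma-1<2\beta-2$, up to shrinking $\delta$ this gives, for every $\alpha\in A$,
\[
L_\alpha V(x):=b(x,\alpha)\cdot DV(x)+\tr\bigl(a(x,\alpha)D^2V(x)\bigr)\;\leq\; 0\qquad\text{on }\Omega_\delta,
\]
where $L_\alpha$ denotes the generator of \eqref{sys} under the constant control $\alpha$. Outside $\Omega_\delta$, $V$ is smooth and $b,a$ are bounded, hence there is a constant $K>0$ with $L_\alpha V\leq K$ on all of $\Omega$, uniformly in $\alpha$.

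Fix $x\in\Omega$, an admissible control $\alpha_\cdot\in\mathcal{A}$, and set $X_t:=X_t^{\alpha_\cdot}$. For $C>\sup_{\Omega\setminus\Omega_\delta}V$, define
\[
\tau^C:=\inf\{t\geq 0\,:\,V(X_t)\geq C\}\;\leq\;\tau^{\alpha_\cdot}_x,
\]
the inequality following because $V\equiv -\log d\to+\infty$ at $\partial\Omega$ and $X$ is continuous. On $[0,\tau^C\wedge t]$ the trajectory stays in $\{V\leq C\}\subset\Omega$, where $V,DV,D^2V$ and $\sigma^T DV$ are bounded, so It\^o's formula applied to $V(X_\cdot)$ produces a genuine martingale part, and taking expectations gives
\[
\E\bigl[V(X_{\tau^C\wedge t})\bigr]\;=\;V(x)+\E\!\int_0^{\tau^C\wedge t}L_{\alpha_s}V(X_s)\,ds\;\leq\;V(x)+Kt.
\]
Using $V\geq 0$ and $V(X_{\tau^C})=C$ on $\{\tau^C\leq t\}$,
\[
C\,\P(\tau^C\leq t)\;\leq\;\E\bigl[V(X_{\tau^C\wedge t})\bigr]\;\leq\;V(x)+Kt,
\]
so $\P(\tau^C\leq t)\to 0$ as $C\to+\infty$. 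Since $\{\tau^{\alpha_\cdot}_x\leq t\}\subseteq\{\tau^C\leq t\}$ for every large $C$, letting $C\to+\infty$ yields $\P(\tau^{\alpha_\cdot}_x\leq t)=0$ for every $t>0$, and hence $\tau^{\alpha_\cdot}_x=+\infty$ almost surely.

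The main obstacle is precisely this uniformity in $\alpha$: Proposition \ref{liap22} is phrased as a one-sided inequality for $F[\log d]$, but what the argument really requires is the pointwise Lyapunov bound $L_\alpha V\leq 0$ near $\partial\Omega$ for every single $\alpha\in A$. This strengthening is where the upgrade from \eqref{irrelevant} to \eqref{invariance} is essential, and it is what distinguishes the present statement (invariance for \emph{every} control) from Proposition \ref{viable} (viability, i.e., existence of \emph{one} inward-keeping control).
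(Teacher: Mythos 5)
Your proof is correct and follows essentially the same strategy as the paper: build a Lyapunov function from $\pm\log d$ via Proposition \ref{liap22}, use that \eqref{invariance} makes the bound $L_\alpha V\leq 0$ near $\partial\Omega$ hold pointwise for every $\alpha$ (not just after the sup/inf in $F$), and then run a stopping-time estimate that holds for each individual control. The only cosmetic differences are the sign convention ($V=-\log d$ vs.\ the paper's $U=\log d$) and that you apply It\^o's formula directly on the localized event, whereas the paper invokes the suboptimality principle for viscosity solutions from \cite{c}; since the extension is $C^2$, the two routes are interchangeable.
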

\begin{proof}  The proof follows the same arguments as in the proof of Proposition \ref{viable}. 
Let $U$ be a $C^2$ extension of the function $ \log(d(x))$  to the whole $\om$, which coincides with $\log d(x)$ in a neighborhood of $\partial\Omega$. We can assume that $U\leq - 1$ 
in $\om$ and moreover, by Proposition \ref{liap3},   there exists a constant $C\geq 0$ such that 
\[F[U](x)\leq C\qquad x\in\Omega.\]

By suboptimality principles for viscosity solutions (see \cite {c}), we get
for every $\delta>0$, every $x\in\Omega\setminus\Omega_\delta$ and every 
$t\geq 0$, recalling that $V\leq -1$,
\[ 
U(x)\leq \inf_{\alpha_\cdot\in\mathcal{A}} \E 
\left[U(X_{\tau^{\delta,\alpha_\cdot}_x\wedge 
t}^{\alpha_\cdot})+C(\tau^{\delta,\alpha_\cdot}_x\wedge t)\right],\] 
and then   for all $x\in \Omega$, and all $\delta>0$ \[ 
\sup_{\alpha_\cdot\in\mathcal{A}} \P \left(\omega\ |\ 
\tau^{\delta,\alpha_\cdot}_x(\omega)\leq t\right)\leq \frac{Ct-U(x)}{-\log\delta}.\]
So,    for every $t\geq 0$, 
\[ \sup_{\alpha_\cdot\in\mathcal{A}}  \P \left(\omega\ |\ \tau^{ \alpha_\cdot}_x(\omega)\leq 
t\right)=0. \]
From this we conclude as in the proof of Proposition \ref{viable} that $\P 
\left(\omega\ |\ \tau^{ \alpha_\cdot}_x(\omega)<+\infty\right)=0$ for every 
$\alpha_\cdot\in\mathcal{A}$. 
  \end{proof}  
This result allows us to interpret $u_\lambda(x)=v_\lambda(x)$ with 
$x\in\Omega$ also as the value of the discounted infinite horizon problem with 
{\em state constraint the open set $\Omega$}. We can also give a representation 
of $u_\lambda$ that is more consistent with the method of construction by means 
of Neumann boundary conditions used in Theorem \ref{ulambda}. In fact, the 
condition $\partial_\nu u=0$ on $\partial\Omega$ is related with the optimal 
control of systems for the state equation
\begin{equation}
\label{sys2}
\begin{cases} dX_t^{\alpha_\cdot}= b(X_t^{\alpha_\cdot}, \alpha_t)dt+\sqrt{2}\sigma(X_t^{\alpha_\cdot}, \alpha_t)dW_t-\nu(X_t^{\alpha_\cdot})dk_t ,\quad X^{\alpha_\cdot}_0=x\in\overline{\Omega}\\
k_t=\int_0^t 
\mathbbm{1}_{\partial\Omega}(X_s^{\alpha_\cdot}) dk_s \quad\text{is 
nondecreasing},
\end{cases} 
\end{equation} 
where $X_t^{\alpha_\cdot}$ and $k_t$ are adapted continuous processes, 
 $\nu$ is the unit outward normal to $\partial\Omega$, and 
$\mathbbm{1}_{\partial\Omega}$ is the indicator function of ${\partial\Omega}$.
This is a controlled diffusion process {\em with normal reflection} at the 
boundary (see, e.g., \cite{al}). By Proposition \ref{prop:invariance}, if 
$x\in\Omega$, the trajectory of \eqref{sys2} corresponding to a given control 
$\alpha_\cdot\in\mathcal{A}$ coincides a.s.~with the trajectory of \eqref{sys} 
associated with the same control. Therefore the solution $u_\lambda$ of the PDE 
\eqref{l}
is also the value function of the discounted infinite horizon problem for the system \eqref{sys2} with trajectories reflected at the boundary.

In conclusion we obtain 
  a stochastic representation formula for the solution pair $c, \chi$ of the ergodic Bellman equation \eqref{cell}.
\begin{corollary} Let \eqref{invariance}, \eqref{sell}, \eqref{reg22s}, 
hold and let $v_\lambda$ be defined by \eqref{vl} with $X_t^{\alpha_\cdot}$ 
solving either \eqref{sys} or \eqref{sys2}. Then the constant $c$ of Theorem 
\ref{cellthm} satisfies
\begin{equation}
\label{formula}
c=\lim_{\lambda\to 0+} \lambda v_\lambda(x) \qquad\forall x\in\Omega,
\end{equation} 
and for any $\tilde x\in\Omega$, a 
 solution of \eqref{cell} corresponding to $c$ is
 \begin{equation*}
\chi(x)=\lim_{\lambda\to 0+} \left(v_\lambda(x) - v_\lambda(\tilde x)\right) ,  \qquad 
 x\in\Omega,
\end{equation*} 
where  the convergence is locally uniform in $\Omega$ in both limits.
\end{corollary}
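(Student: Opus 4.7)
The plan is to reduce everything to the PDE already constructed, use the a priori estimates of Theorem \ref{ulambda} to get subsequential limits, and then invoke the uniqueness statement of Proposition \ref{uniprop} to upgrade convergence to the full family $\lambda\to 0^+$.

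First, by Proposition \ref{prop:invariance} (combined with pathwise uniqueness for \eqref{sys} and \eqref{sys2}) the trajectories of \eqref{sys} and \eqref{sys2} starting from $x\in\Omega$ coincide almost surely, so both definitions yield the same $v_\lambda$ on $\Omega$. By the preceding proposition $v_\lambda\equiv u_\lambda$ on $\Omega$, where $u_\lambda\in C^2(\Omega)$ is the unique bounded solution of \eqref{l} produced by Theorem \ref{ulambda}. Thus it suffices to study $\lambda u_\lambda$ and $u_\lambda-u_\lambda(\tilde x)$ as $\lambda\to 0^+$.

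Second, I extract a convergent subsequence. The bound \eqref{b2} gives $|\lambda u_\lambda(\tilde x)|\leq\|l\|_\infty$, so along any sequence $\lambda_n\to 0^+$ one has, up to a subsequence, $\lambda_n u_{\lambda_n}(\tilde x)\to -c^\ast$ for some $c^\ast\in\R$. Set $w_n:=u_{\lambda_n}-u_{\lambda_n}(\tilde x)$; Theorem \ref{ulambda} yields uniform $C^{1,\gamma}(K)$ bounds on $(w_n)$ for every $K\subset\subset\Omega$, so by Arzelà--Ascoli and a diagonal procedure a further subsequence converges locally uniformly in $\Omega$ to some $\chi^\ast\in C(\Omega)$. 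Since $w_n$ solves \eqref{l} with $l$ replaced by $l+\lambda_n u_{\lambda_n}(\tilde x)$ and with the discount term $\lambda_n w_n\to 0$ locally uniformly (as $w_n$ is locally bounded), the stability of viscosity solutions shows that $(c^\ast,\chi^\ast)$ solves \eqref{cell}.

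Third, I verify that $\chi^\ast$ obeys \eqref{chi}, so that Proposition \ref{uniprop} applies. Subtracting $u_{\lambda_n}(\tilde x)$ from the two-sided bound \eqref{u<log} of Theorem \ref{ulambda} gives, for every $h>0$,
\[
h\log(d(x))+\min_{\Omega\setminus\Omega_{\delta_h}}w_n\leq w_n(x)\leq -h\log(d(x))+\max_{\Omega\setminus\Omega_{\delta_h}}w_n,\qquad x\in\Omega_{\delta_h}.
\]
Since $(w_n)$ is locally uniformly bounded, passing to the limit yields $|\chi^\ast(x)|\leq -h\log(d(x))+C_h$ near $\partial\Omega$, and the arbitrariness of $h$ gives \eqref{chi}.

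Finally, Proposition \ref{uniprop} (together with Theorem \ref{cellthm}) forces $c^\ast=c$, and determines $\chi^\ast$ up to an additive constant. Because $w_n(\tilde x)=0$ for all $n$, the normalization $\chi^\ast(\tilde x)=0$ selects a single admissible limit, so $\chi^\ast$ equals the distinguished solution $\chi$ of \eqref{cell} satisfying $\chi(\tilde x)=0$. Since every subsequence admits a further subsequence with the \emph{same} limit, the full families converge:
\[
\lambda v_\lambda(x)\to c,\qquad v_\lambda(x)-v_\lambda(\tilde x)\to\chi(x),
\]
locally uniformly on $\Omega$. The only delicate point is the third step: propagating the logarithmic bounds \eqref{u<log} to the limit so that the uniqueness of Proposition \ref{uniprop} genuinely applies; the rest is a standard compactness/stability/uniqueness argument.
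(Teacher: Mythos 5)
Your proof is correct and is essentially the argument the paper intends: reduce $v_\lambda$ to the PDE solution $u_\lambda$ via the preceding propositions, use the a priori estimates \eqref{b2} and \eqref{u<log} from Theorem \ref{ulambda} for subsequential compactness and to propagate the boundary growth to the limit, and then invoke the uniqueness of Proposition \ref{uniprop} together with the normalization at $\tilde x$ to upgrade every subsequential limit to the same pair, hence to full convergence of the family. One small wrinkle to be aware of: your own second step gives $\lambda_n u_{\lambda_n}(\tilde x)\to -c^\ast$ with $c^\ast=c$, so your displayed conclusion $\lambda v_\lambda(x)\to c$ should read $\lambda v_\lambda(x)\to -c$ to be consistent with that step and with the sign convention in the proof of Theorem \ref{cellthm} (the apparent discrepancy is inherited from the sign in the paper's own statement of the corollary relative to equation \eqref{cell}).
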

The formula \eqref{formula} shows a connection with ergodic control, since the limits 
$$\lim_{\lambda\to 0+} \lambda\E 
\left[\int_0^\infty e^{-\lambda t} l(X_t^{\alpha_\cdot},\alpha_t) dt \right] ,
\qquad\lim_{T\to +\infty} \frac 1T 
\E 
\left[\int_0^T l(X_t^{\alpha_\cdot},\alpha_t) dt \right]
$$ coincide if either one exists, by classical Abelian-Tauberian theorems.


\begin{thebibliography}{99}
 



\bibitem{abmem} O. Alvarez, M. Bardi: {\it   Ergodicity, stabilization, and singular perturbations for Bellman-Isaacs equations}.  Mem. Amer. Math. Soc. 204 (2010), no. 960, vi+77 pp.


\bibitem{abg} A. Arapostathis, V.S. Borkar, M.K. Ghosh: Ergodic control of diffusion processes. 
Cambridge University Press, Cambridge, 2012.

\bibitem{al} M. Arisawa, P.-L. Lions: {\it On ergodic stochastic control}. Comm. Partial Differential Equations 23 (1998), no. 11-12, 2187-2217. 

\bibitem{BCD} M. Bardi, I. Capuzzo-Dolcetta: Optimal control and viscosity 
solutions of Hamilton-Jacobi-Bellman equations. Birkh\"auser Boston, Boston, MA, 
1997.

\bibitem{bdl01} M. Bardi, F.  Da Lio: {\it Propagation of maxima and 
strong maximum principle for fully nonlinear degenerate elliptic 
equations.  Part I: convex operators}. Nonlinear Anal.  44 (2001), 991--1006.

\bibitem{bdl03} M. Bardi, F.  Da Lio: 
{\it Propagation of maxima and strong maximum principle for fully nonlinear 
degenerate elliptic equations.  Part II: concave operators}. Indiana 
Univ.  Math.  J.   52 (2003), 607--627.

\bibitem{bg} M. Bardi, P. Goatin: {\it Invariant sets for 
controlled degenerate diffusions: a viscosity solutions approach}, in 
``Stochastic Analysis, Control, Optimization and Applications: A Volume 
in Honor of W.H.  Fleming'', W.M.  McEneaney, G.G.  Yin, and Q.  Zhang 
eds., pp.  191-208, Birkh\"auser, Boston, 1999. 

\bibitem{bj} M. Bardi, R. Jensen: {\it A geometric
characterization of viable sets for controlled degenerate
diffusions}. Set-Valued Anal. 10 (2002), no. 2-3, 129--141.

\bibitem{barles} G. Barles: {\it Fully nonlinear Neumann type boundary conditions for second-order elliptic 
and parabolic equations}. J. Differential Equations 106 (1993), no. 1, 90--106. 

\bibitem{bb} G. Barles, J. Burdeau: {\it The Dirichlet problem for semilinear 
second-order degenerate elliptic equations and applications to stochastic exit 
time control problems}. Comm. in Partial Differential Equations 20 (1-2), 1995, 
pp 129--178.  

\bibitem{bdl} G. Barles, F. Da Lio: {\it On the boundary ergodic problem for fully nonlinear 
equations in bounded domains with general nonlinear Neumann boundary conditions}. 
Ann. Inst. H. Poincar\'e Anal. Non Lin\'eaire 22 (2005), no. 5, 521--541.

\bibitem{bpt} G. Barles, A. Porretta, T.T. Tchamba: {\it  On the large time behavior 
of solutions of the Dirichlet problem for subquadratic viscous Hamilton-Jacobi equations}.
J. Math. Pures Appl. (9) 94 (2010), no. 5, 497--519. 
 
\bibitem{br} G.  Barles, E. Rouy: {\it A strong comparison result for the 
Bellman equation arising in stochastic exit time control problems and its 
applications}. Comm. Partial Differential Equations 23 (1998), no. 11-12, 
1995--2033.

 

\bibitem{bf2} 
A. Bensoussan, J. Frehse: {\it Ergodic control Bellman equation with Neumann boundary conditions}.
Lecture Notes in Control and Inform. Sci., vol. 280, Springer, Berlin, 2002, pp. 59--71. 

\bibitem{bcpr} H. Berestycki, I. Capuzzo Dolcetta, A. Porretta, L. Rossi:
{\it Maximum Principle and generalized principal eigenvalue for degenerate elliptic operators}. 
To appear in J. Math. Pures Appl. 

\bibitem{bbud} V. Borkar, A. Budhiraja: {\it Ergodic control for constrained diffusions: 
characterization using HJB equations}. SIAM J. Control Optim. 43 (2004/05), 
 1467--1492.

\bibitem{cdf} P. Cannarsa, G.  Da Prato, H. Frankowska: {\it 
Invariant measures associated to degenerate elliptic operators}. 
Indiana Univ. Math. J. 59 (2010), no. 1, 53--78. 

\bibitem{CDC}
I. {Capuzzo Dolcetta}, A. Cutr\`\i: {\it Hadamard and {L}iouville type results 
for fully nonlinear partial differential inequalities}. Commun. Contemp.
  Math. 5 (2003), no.~3, 435--448.

\bibitem{c} A. Cesaroni: {\it Lyapunov Stabilizability of Controlled Diffusions via a Superoptimality 
Principle for Viscosity Solutions}.
 Appl. Math. Optim. 53 (2006), no. 1, 1--29. 

\bibitem{CF}  H. Chen, P. Felmer: {\it On Liouville type theorems for fully nonlinear elliptic equations with gradient term}. J. Differential Equations 255 (2013), no. 8, 2167--2195.

\bibitem{cil} M. G. Crandall, H. Ishii, P.-L. Lions: {\it User's guide to viscosity solutions 
of second order partial differential equations}.  Bull. Amer. Math. Soc. (N.S.)  27  (1992),  no. 1, 1--67.


\bibitem{f} G. Fichera: {\it Sulle equazioni differenziali lineari 
ellittico-paraboliche 
del secondo ordine}.  Atti Accad. Naz. Lincei. Mem. Cl. Sci. Fis. Mat. Nat. Sez. I. 5, 1--30 (1956) 

\bibitem{FS}
 W.~H. Fleming, H.~M. Soner: Controlled {M}arkov processes and viscosity
  solutions.
  Springer, New York, 2006, 2nd ed.


\bibitem{gt} D. Gilbarg, N.S. Trudinger:  Elliptic Partial Differential 
Equations of Second Order. Springer, Berlin 1988.

\bibitem{hl} U. G. Haussmann, J.-P.  Lepeltier: {\it 
On the existence of optimal controls}.
SIAM J. Control Optim. 28 (1990), no. 4, 851--902. 

\bibitem{ll} J. M. Lasry, P.-L. Lions:{ \it Nonlinear elliptic equations with singular boundary 
conditions and stochastic control with state constraints. I. The model problem}. Math. Ann. 283 (1989), 583--630.

\bibitem{lp} T. Leonori, A. Porretta: {\it Gradient bounds for elliptic problems singular at the boundary}.
 Arch. Ration. Mech. Anal. 202 (2011), no. 2, 663--705.

\bibitem{pll:85} P.-L. Lions: {\it Quelques remarques sur les problèmes 
elliptiques quasilin\'eaires du second ordre}.
J. Analyse Math. 45 (1985), 234--254. 

\bibitem{lv95} P.-L. Lions, C. Villani: {\it 
R\'egularit\'e optimale de racines carr\'ees.}
C. R. Acad. Sci. Paris S\'er. I Math. 321 (1995), no. 12, 1537–-1541.

\bibitem{p} A. Porretta: {\it The ``ergodic limit'' for a viscous 
Hamilton-Jacobi equation with Dirichlet conditions}. Atti Accad. Naz. Lincei Cl. 
Sci. Fis. Mat. Natur. Rend. Lincei (9) Mat. Appl. 21 (2010), no. 1, 59--78. 


 \bibitem{s} M.~V. Safonov: {\it On the classical solution of Bellman's elliptic equation}.   Sov. Math. Dokl. 30 (1984), 482--485. 

 \bibitem{tr3} N.~S.  Trudinger: {\it On regularity and existence of viscosity 
solutions of nonlinear second order, elliptic equations}. Partial differential 
equations and the calculus of variations, Vol. II, 939--957, Progr. Nonlinear 
Differential Equations Appl., 2, Birkhäuser Boston, Boston, MA, 1989. 

 
\end{thebibliography}
\end{document}